\date{}
\theoremstyle{plain}
\newtheorem{theorem}{Theorem}
\newtheorem{lemma}{Lemma}
\newtheorem{claim}{Claim}
\newtheorem{fact}{Fact}
\newtheorem{corollary}{Corollary}
\theoremstyle{definition}
\newtheorem{definition}{Definition}
\theoremstyle{remark}
\tikzstyle{vertex}=[circle,fill=black!100,text=white,inner sep=0.8mm]
\tikzstyle{point}=[circle,fill=black,inner sep=0.1mm]
\DeclareMathOperator{\lett}{let}
\begin{document}

\title{Combinatorics and algorithms \\ for quasi-chain graphs}
\author{Bogdan Alecu\thanks{Mathematics Institute, University of Warwick, Coventry, CV4 7AL, UK. B.Alecu@warwick.ac.uk} 
\and Aistis Atminas\thanks{Department of Mathematical Sciences, Xi'an Jiaotong-Liverpool University, 111 Ren'ai Road, Suzhou 215123, China. Email: Aistis.Atminas@xjtlu.edu.cn} \and 
Vadim Lozin\thanks{Mathematics Institute, University of Warwick, Coventry, CV4 7AL, UK. V.Lozin@warwick.ac.uk} \and 
Dmitriy Malyshev\thanks{Laboratory of Algorithms and Technologies for Networks Analysis, National Research University
Higher School of Economics, 136 Rodionova Str., 603093, Nizhny Novgorod, Russia. Email: dsmalyshev@rambler.ru}$~^,$\thanks{The work of Malyshev D.S. 
was conducted within the framework of the Basic Research Program at the National Research University Higher School of Economics (HSE).}
}

\maketitle

\begin{abstract}
The class of quasi-chain graphs is an extension of the well-studied class of chain graphs.   
This latter class enjoys many nice and important properties, such as bounded clique-width, implicit representation, well-quasi-ordering by induced subgraphs, etc.
The class of quasi-chain graphs is substantially more complex. In particular, this class is not well-quasi-ordered by induced subgraphs,
and the clique-width is not bounded in it. In the present paper, we show that the universe of quasi-chain graphs is at least as complex
as the universe of permutations by establishing a bijection between the class of all permutations and a subclass of quasi-chain graphs.
This implies, in particular, that the induced subgraph isomorphism problem is NP-complete for quasi-chain graphs. On the other hand, 
we propose a decomposition theorem for quasi-chain graphs that implies an implicit representation for graphs in this class and 
efficient solutions for some algorithmic problems that are generally intractable. 
\end{abstract}

{\it Keywords}: bipartite graphs; implicit representation;  polynomial-time algorithm

\section{Introduction}

A bipartite graph is a {\it chain graph} if the neighbourhoods of the vertices in each part of its bipartition form a chain
with respect to the inclusion relation. The class of chain graphs
appeared in the literature under various names such as difference graphs \cite{difference} or half-graphs  \cite{half-graph}.
In model theory, half-graphs appear as an instance of the order property \cite{model}. The class of chain graphs is closely related to 
one more well-studied class of graphs, known as threshold graphs, and together they share many nice and important properties.
In particular, 
\begin{itemize}
\item chain graphs have bounded clique-width (and even linear clique-with), which implies polynomial-time solutions 
for a variety of algorithmic problems that are generally NP-hard;
\item chain graphs are well- (and even better-) quasi-ordered under induced subgraphs. This is because another important parameter, graph lettericity, 
is bounded for chain graphs \cite{lettericity};
\item chain graphs admit an implicit representation, which in turn implies a small induced-universal graph for the class. More specifically, 
there is a chain graph with $2n$ vertices containing all $n$-vertex chain graphs as induced subgraphs \cite{universal}.
\end{itemize}

In the terminology of forbidden induced subgraphs, the class of chain graphs is precisely the class of $2P_2$-free bipartite graphs,
i.e., bipartite graphs that do not contain the disjoint union of two copies of $P_2$ as an induced subgraph ($P_n$ denotes the chordless path on $n$ vertices).

In the present paper, we study a class of bipartite graphs that forms an extension of chain graphs defined by 
relaxing the chain property of the neighbourhoods in the following way. We say that 
a linear ordering $(a_1,\ldots,a_\ell)$ of vertices is {\it good} if for all $i<j$, the neighbourhood of $a_j$ contains at most $1$ non-neighbour of $a_i$.
We call a bipartite graph $G$ a {\it quasi-chain} graph
if the vertices in each part of its bipartition admit a good ordering. Alternatively, quasi-chain graphs  are bipartite graphs
that do not contain an unbalanced induced copy of $2P_3$. Notice that $2P_3$ admits two bipartitions: one with parts of equal size (balanced)
and the other with parts of different sizes (unbalanced). In the unbalanced bipartition, one of the parts does not admit a good ordering and hence quasi-chain 
graphs are free of unbalanced $2P_3$. On the other hand, if a bipartite graph $G$ does not contain an unbalanced induced copy of $2P_3$, then
by ordering the vertices in each part in a non-increasing order of their degrees we obtain a good ordering, i.e., $G$ is a quasi-chain graph.

The class of quasi-chain graphs is substantially richer and more complex than the class of chain graphs. In particular, 
it is not well-quasi-ordered by induced subgraphs \cite{wqo} and the clique-width is not bounded in this class \cite{cw}. 
To emphasize the complex nature of this class, in Section~\ref{sec:permutations} we establish a bijection $f$ between 
the class of all permutations and a subclass of quasi-chain graphs such that a permutation $\pi$ contains a permutation $\rho$ as a pattern 
if and only if the graph $f(\pi)$ contains the graph $f(\rho)$ as an induced subgraph. Together with the NP-completeness of the {\sc pattern matching} problem for permutations
this implies the NP-completeness of the {\sc induced subgraph isomorphism} problem for quasi-chain graphs. 
  
The relationship between permutations and quasi-chain graphs also implies the existence of infinite antichains of quasi-chain graphs with respect to the induced subgraph relation 
and hence the unboundedness of lettericity in this class. In Section~\ref{sec:let}, we identify the exact boundary separating hereditary subclasses of quasi-chain graphs with bounded 
lettericity from those where this parameter is unbounded.  

In spite of the more complex structure, the quasi-chain graphs inherit some attractive properties of chain graphs.
To show this, in Section~\ref{sec:struct} we propose a structural characterisation that describes any quasi-chain graph  as the symmetric difference 
of two graphs $Z$ and $H$, where $Z$ is a chain graph and $H$ is a graph of vertex degree at most $2$. This characterisation
allows us to prove that quasi-chain graphs admit an implicit representation (Section~\ref{sec:implicit}) and that some algorithmic problems that are NP-complete for
general bipartite graphs admit polynomial-time solutions when restricted to quasi-chain graphs (Section~\ref{sec:opt}). 
All preliminary information related to the topic of the paper can be found in Section~\ref{sec:pre}.
 

\section{Preliminaries} 
\label{sec:pre}

All graphs in this paper are simple, i.e., undirected, with neither loops nor multiple edges. 
The vertex set and the edge set of a graph $G$ are denoted $V(G)$ and $E(G)$, respectively.
The {\it neighbourhood} of a vertex $v\in V(G)$ is the set of vertices adjacent to $v$. We denote the neighbourhood of $v$ 
in the graph $G$ by $N_G(v)$ and omit the subscript if it is clear from the context.

In a graph, an {\it independent set} is a subset of pairwise non-adjacent vertices and a {\it clique} is a subset of pairwise adjacent vertices.
A graph is bipartite if its vertex set can be partitioned into two independent sets, which we refer to as the parts or colour classes of the graph.
A bipartite graph $G=(V,E)$ given together with a bipartition $V=A\cup B$ is denoted $G=(A,B,E)$. 
Once such a bipartition has been fixed, we may define the {\em bipartite complement} $\widetilde{G}=(A,B,E')$ of $G$, 
in which two vertices $a\in A$ and $b\in B$ are adjacent if and only if they are not adjacent in $G$ (that is, $E' = (A \times B) - E)$. 

As usual, $P_n$ denotes a chordless path with $n$ vertices and $K_{p.q}$ denotes a complete bipartite graph with parts of size $p$ and $q$.
The disjoint union of $n$ copies of $G$ is denoted $nG$.

The subgraph of $G$ induced by a set $U\subseteq V(G)$ is denoted $G[U]$. If $G$ contains no induced subgraphs isomorphic to a graph $H$, then we say that $G$ is $H$-free
and call $H$ a forbidden induced subgraph for $G$. A class of graphs is {\it hereditary} if it is closed under taking induced subgraphs. It is well-known that a class 
is hereditary if and only if it can be characterised by means of minimal forbidden induced subgraphs. 

Of particular interest in this paper is the class of chain graphs. By definition, a bipartite graph $G=(A,B,E)$ is a chain graph if the vertices 
in each part can be ordered $A=(a_1,\ldots,a_\ell)$ and $B=(b_1,\ldots,b_k)$ so that $N(a_1)\supseteq\ldots\supseteq N(a_\ell)$ and $N(b_1)\subseteq\ldots\subseteq N(b_k)$.
We call this ordering {\it perfect}. A typical example of a chain graph is represented in Figure~\ref{fig-chain}. We denote a graph of this form with $n$ vertices in each part by $Z_n$. The graph $Z_n$ is
typical in the sense that it contains every chain graph with $n$ vertices as an induced subgraph, i.e., $Z_n$ is an $n$-universal chain graph \cite{universal}.   
 
%
%
 
Chain graphs are precisely $2P_2$-free bipartite graphs, i.e., $2P_2$ is the only minimal bipartite graph which is not a chain graph. 
This implies, in particular, that $G=(A,B,E)$ is a chain graph if the vertices in {\it one of the parts} can be ordered under inclusion of their neighbourhoods,
because two vertices with incomparable neighbourhoods in one part give rise to two vertices with incomparable neighbourhoods in the other part.

In this paper, we consider an extension of the class of chain graphs which can be described by forbidding an unbalanced induced copy of $2P_3$ (see Figure~\ref{fig:2P3}).
We call these graphs {\it quasi-chain graphs}, and refer to them as $2P_3$-free bipartite graphs without specifying that we forbid only an unbalanced copy of $2P_3$.  
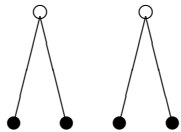
\begin{figure}[ht]
\begin{center} 
\begin{picture}(100,50)
\put(10,0){\circle*{5}}
\put(30,0){\circle*{5}}
\put(20,42){\circle{5}}
\put(20,40){\line(-1,-4){10}}
\put(20,40){\line(1,-4){10}}

\put(50,0){\circle*{5}}
\put(70,0){\circle*{5}}
\put(60,42){\circle{5}}
\put(60,40){\line(-1,-4){10}}
\put(60,40){\line(1,-4){10}}

\end{picture} 
\end{center}
\caption{An unbalanced $2P_3$ }
\label{fig:2P3}
\end{figure}

The name {\it quasi-chain} reflects the fact that the neighbourhoods of vertices in each part create ``nearly'' a chain. 
More formally, a linear ordering $(a_1,\ldots,a_\ell)$ of vertices is {\it good} if $|N(a_j)-N(a_i)|\le 1$ for all $j>i$.
Then a bipartite graph is a quasi-chain graph if and only if the vertices in each part of its bipartition admit a good ordering.

Quasi-chain graphs appeared in the literature, without this name, in various contexts. In particular,  \cite{Allen} studies the number of $n$-vertex labelled graphs in this class,
\cite{cw} proves that the clique-width of quasi-chain graphs is unbounded, while \cite{wqo} shows that graphs in this class are not well-quasi-ordered by induced subgraphs
by establishing an intriguing relation between quasi-chain graphs and permutations. In the next section, we elaborate on this topic and show that, with some reservation, 
this relation can be developed into a bijection.


\section{Quasi-chain graphs and permutations} 
\label{sec:permutations}
Given two permutations $\pi=(\pi(1), \pi(2), \ldots, \pi(k))$ and  $\rho=(\rho(1), \rho(2), \ldots, \rho(n))$, we will write   $\pi\subseteq \rho$ 
to indicate that $\pi$ is contained in $\rho$ as a pattern, i.e., there is an order-preserving injection $e: \{1,2,\ldots,k\}\to \{1,2,\ldots,n\}$ such that 
$\pi(i)<\pi(j)$ if and only if $\rho(e(i))< \rho(e(j))$ for all $1 \leq i < j \leq k$. The pattern containment relation on permutations is the subject of a vast literature,
see, e.g., the book \cite{Kitaev} and the references therein. By mapping each permutation to its permutation graph, we transform the pattern containment relation on permutations 
into the induced subgraph relation on graphs. This mapping, however, is not injective, as it can map different permutations to the same (up to an isomorphism) graph. 
In the present section, we propose an alternative mapping from permutations to graphs: we map permutations to quasi-chain graphs, in such a way that two permutations are 
comparable if and only if their images are comparable. To make this mapping injective, we require the quasi-chain graphs to be coloured. 
That is, we will assume that every quasi-chain graph is given together with a partition of its vertex set into an independent set $A$ of white vertices
and an independent set $B$ of black vertices and we will write $G\subseteq H$ to indicate that $G$ is a coloured induced subgraph of $H$, i.e., there is 
an induced subgraph embedding of $G$ into $H$ that respects the colours. The distinction between coloured and uncoloured graphs matters, for instance, in the assignment problem.

We denote our mapping from permutations to graphs by $f$ and define it as follows. If 
$\pi=(\pi(1), \pi(2), \ldots, \pi(n))$ is an $n$-entry permutation, then $f(\pi)$ is a bipartite graph with 
parts $A=\{a_1, a_2, \ldots, a_{2n}\}$ and $B=\{b_1, b_2, \ldots, b_{2n}\}$ and the following edges:
\begin{itemize}
\item[(i)] for any $1\leq i \leq j \leq 2n$, we have $a_ib_j \in E(G)$, 
\item[(ii)] for any $1\leq i \leq n$, we have $a_{n+i} b_{\pi(i)} \in E(G)$. 
\end{itemize}
We write $G_\pi:=f(\pi)$ and say that $G_\pi$  is the quasi-permutation graph of $\pi$. 
Any graph $G$ isomorphic to $G_\pi$ for some $\pi$ will be called a quasi-permutation graph. 
It follows easily from the definition that $f$ is order-preserving, in that $\pi \subseteq \rho$ implies $f(\pi) \subseteq f(\rho)$. 

\begin{claim}
Any quasi-permutation graph $G$ is a quasi-chain graph. 
\end{claim}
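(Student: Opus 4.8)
The plan is to verify the definition of quasi-chain graph directly, by exhibiting a good ordering of the vertices in each of the two parts $A$ and $B$ of $G_\pi$. The starting point is to read off the neighbourhoods from the two edge rules. Rule (i) alone contributes, for every index $i$, the ``segment'' $\{b_\ell : \ell \ge i\}$ to $N(a_i)$ and the ``segment'' $\{a_\ell : \ell \le j\}$ to $N(b_j)$; rule (ii) then adds at most one further edge at each vertex, namely $b_{\pi(k)}$ at $a_{n+k}$ for $1\le k\le n$ (equivalently, $a_{n+\pi^{-1}(j)}$ at $b_j$ for $1\le j\le n$). Thus every neighbourhood decomposes as a nested segment coming from (i) plus an ``extra'' set of size at most $1$ coming from (ii).

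For the part $A$ I would take the natural ordering $(a_1,a_2,\ldots,a_{2n})$ and show it is good. Fix $i<j$ and write $N(a_j)=S_j\cup E_j$, where $S_j=\{b_\ell:\ell\ge j\}$ is the segment from (i) and $E_j$ is the extra set from (ii), so $|E_j|\le 1$. Since $i<j$ we have $S_j\subseteq S_i\subseteq N(a_i)$, so the only vertices of $N(a_j)$ that can fail to lie in $N(a_i)$ are those of $E_j$. Hence $|N(a_j)\setminus N(a_i)|\le|E_j|\le 1$, and the ordering is good. I would stress that this argument is uniform: it makes no case distinction between indices at most $n$ and indices exceeding $n$, because the segment $S_j$ depends only on the index $j$.

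For the part $B$ the segments from (i) grow rather than shrink with the index, so I would instead use the reverse ordering $(b_{2n},b_{2n-1},\ldots,b_1)$. For two indices $j<j'$ (so that $b_{j'}$ precedes $b_j$), write $N(b_j)=T_j\cup F_j$ with $T_j=\{a_\ell:\ell\le j\}$ and $|F_j|\le 1$. Then $T_j\subseteq T_{j'}\subseteq N(b_{j'})$, so again $N(b_j)\setminus N(b_{j'})\subseteq F_j$ has size at most $1$, which is precisely the good-ordering condition for this reverse ordering. Having produced a good ordering for each of the two parts, $G_\pi$ is a quasi-chain graph by definition.

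The step requiring the most care is the bookkeeping behind the claim that each ``extra'' set has size at most $1$: one must check that rule (ii) attaches only a single edge to each vertex. For the vertices $a_{n+k}$ this is immediate from the form of rule (ii), but for the vertices $b_j$ it is exactly where the injectivity of $\pi$ enters — each $b_j$ with $j\le n$ receives the single neighbour $a_{n+\pi^{-1}(j)}$ precisely because $\pi$ is a bijection, so $\pi^{-1}(j)$ is unique. The index inequality $\pi(k)\le n<n+k$ additionally certifies that the extra neighbours are genuinely new (disjoint from the relevant segments), which is reassuring although not strictly needed for the containments above. Once this point is settled, no further case analysis is required and the two orderings complete the proof.
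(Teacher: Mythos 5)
Your proof is correct and follows essentially the same route as the paper: the type (i) edges give nested neighbourhoods forming a chain structure, while the type (ii) edges form a matching (one extra neighbour per vertex, by bijectivity of $\pi$), so each later vertex in the chosen ordering has at most one private neighbour with respect to each earlier one. Your write-up is simply a more explicit version of the paper's argument, including the same reverse ordering on $B$ implicit in the paper's condition $|N(b_i)-N(b_j)|\le 1$ for $i<j$.
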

\begin{proof}
We observe that the edges of type (i) define a chain subgraph of $G$ in which $N(a_j)\subseteq  N(a_i)$  for all $1 \leq i < j \leq 2n$.
The edges of type (ii) form a matching and therefore in the graph $G$ we have $|N(a_j) - N(a_i)| \leq 1$ for all $1 \leq i < j \leq 2n$.
Similarly,  $|N(b_i) - N(b_j)| \leq 1$ for all $1 \leq i < j \leq 2n$ in $G$. This shows that $A$ and $B$ have good orderings, and so any quasi-permutation graph $G$ is a quasi-chain graph.  
\end{proof}

\begin{claim}
The mapping $f$ is a bijection from the class of all permutations to the (non-hereditary) class of quasi-permutation graphs. 
\end{claim}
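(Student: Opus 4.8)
The map $f$ is onto the class of quasi-permutation graphs essentially by definition, since that class is \emph{defined} as the graphs isomorphic to some $G_\pi$; so the only real content is injectivity. Because $f(\pi)$ has $4n$ vertices, the length $n$ is read off from $|V(G_\pi)|$, and it remains to recover $\pi$ itself from $G_\pi$ by a procedure invariant under colour-preserving isomorphism (which sends $A$ to $A$ and $B$ to $B$). The plan is to exhibit such a reconstruction $R$ with $R(G_\pi)=\pi$; then $G_\pi\cong G_\rho$ forces $\pi=R(G_\pi)=R(G_\rho)=\rho$, which is exactly injectivity, and together with surjectivity this gives the bijection.

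First I would carry out a degree computation. From the definition one gets $\deg(a_i)=2n-i+1$ for $1\le i\le n$ and $\deg(a_{n+i})=n-i+2$ for $1\le i\le n$, and dually $\deg(b_j)=j+1$ for $1\le j\le n$ while $\deg(b_j)=j$ for $n<j\le 2n$. The key observation is that within $A$ (and symmetrically within $B$) all degrees are distinct except for a single collision at the value $n+1$, attained by the pair $a_n,a_{n+1}$ (resp.\ $b_n,b_{n+1}$). In particular, for each $i\in\{2,\ldots,n\}$ the vertex $a_{n+i}$ is the \emph{unique} vertex of $A$ of degree $n-i+2\le n$, so it is pinned down in an isomorphism-invariant way.

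Next, fixing $i\in\{2,\ldots,n\}$, I would isolate its type-(ii) partner by degree: the type-(i) neighbours $b_{n+i},\ldots,b_{2n}$ of $a_{n+i}$ all have degree $\ge n+i\ge n+2$, whereas its single type-(ii) neighbour $b_{\pi(i)}$ has degree $\pi(i)+1\le n+1$. Hence $a_{n+i}$ has exactly one neighbour of degree at most $n+1$, namely $b_{\pi(i)}$, and since the degrees $2,3,\ldots,n+1$ of $b_1,\ldots,b_n$ are pairwise distinct we may set $\pi(i)=\deg(b_{\pi(i)})-1$. This recovers $\pi(2),\ldots,\pi(n)$, and $\pi(1)$ is then forced as the unique element of $\{1,\ldots,n\}\setminus\{\pi(2),\ldots,\pi(n)\}$. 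Every ingredient used (degrees, adjacencies, and the colour classes $A,B$) is preserved by colour-preserving isomorphism, so this defines the required invariant $R$.

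The delicate point, which I expect to be the main obstacle, is precisely the degree collision at $n+1$: the two central vertices $a_n,a_{n+1}$ (and $b_n,b_{n+1}$) cannot be separated by degree alone, and for suitable $\pi$ they really are twins. The reconstruction above is arranged to never depend on telling that ambiguous pair apart: it extracts $\pi(2),\ldots,\pi(n)$ only from the low-degree vertices $a_{n+2},\ldots,a_{2n}$, recovering $\pi(1)$ by elimination. The crux is to verify that the degree threshold $n+2$ on the type-(i) neighbours genuinely isolates the type-(ii) partner of $a_{n+i}$; this is exactly where $i=1$ would fail (since then $b_{n+1}$ also has degree $n+1$), justifying the elimination step. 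The trivial base case $n=1$, where $G_\pi=K_{2,2}$ and there is only one permutation, I would dispose of separately.
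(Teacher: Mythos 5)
Your proposal is correct and follows essentially the same route as the paper: both recover the degree sequence $(2,3,\ldots,n+1,n+1,n+2,\ldots,2n)$ in each part, identify each $a_{n+i}$ for $i\ge 2$ as the unique vertex of its (low) degree, read off $\pi(i)$ as the degree of its unique neighbour of degree at most $n+1$, and then determine $\pi(1)$ by elimination, which is exactly the paper's observation that two equal-length permutations cannot differ in a single entry. Your explicit treatment of the $a_n,a_{n+1}$ degree collision and of the base case $n=1$ only makes the shared argument slightly more careful than the paper's version.
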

\begin{proof}
The mapping $f$ is surjective by the definition of quasi-permutation graphs. 
Now notice that in the graph $f(\pi)$ the degree sequence of vertices in both $A$ and $B$ is $(2,3,4, \ldots, n+1, n+1, n+2, \ldots, 2n)$. 
In particular, $f(\pi)$ uniquely determines the size of $\pi$.

The unique vertex of $A$ with degree $2$ is adjacent to vertices $b_{2n}$ and $b_{\pi(n)}$ in part $B$.
Vertex $b_{2n}$ has degree $2n$ and vertex $b_{\pi(n)}$ has degree  $k$, for some $k \le n + 1$. 
Inspecting the value of $k$ allows us to determine the value of $\pi(n)$, which is $k-1$. 
Similarly, the unique vertex of degree $3$ has three neighbours: $b_{2n}, b_{2n-1}$ and $b_{\pi(n-1)}$, which allows us to determine the value of $\pi (n-1)$. 
In this way, we see that $f(\pi)$ uniquely determines $\pi(i)$ for all $2 \leq i \leq n$. 
But two permutations with the same number of elements cannot disagree in exactly one entry, hence the graph $f(\pi)$ uniquely determines the permutation $\pi$. Therefore, $f$ is injective.   
\end{proof}

\begin{claim}\label{claim:3}
Let $\pi$ and $\rho$ be two permutations with $n$ and $m$ entries, respectively, with $n \leq m$ and $\pi(1) \neq n$. 
If $f(\pi) \subseteq f(\rho)$, then $\pi \subseteq \rho$.  
\end{claim}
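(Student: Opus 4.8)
The plan is to treat the embedding concretely. Fixing the canonical labellings of $f(\pi)$ and $f(\rho)$, I would record a colour-preserving induced embedding $\phi$ as a pair of injections $\sigma,\tau\colon\{1,\dots,2n\}\to\{1,\dots,2m\}$ with $\phi(a_i)=a'_{\sigma(i)}$ and $\phi(b_j)=b'_{\tau(j)}$. The starting observation is the edge-type dichotomy in $f(\rho)$: an edge $a'_pb'_q$ is a chain (type (i)) edge exactly when $p\le q$, and a matching (type (ii)) edge exactly when $p>q$, the two being disjoint since a matching edge $a'_{m+k}b'_{\rho(k)}$ has $m+k>m\ge\rho(k)$. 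Thus the induced-subgraph condition translates, edge by edge, into inequalities and matching-incidences among the values $\sigma(\cdot),\tau(\cdot)$. It is convenient to organise this through the four-quadrant decomposition of $f(\pi)$: the high whites $a_1,\dots,a_n$ are joined to the high blacks $b_{n+1},\dots,b_{2n}$ by a \emph{complete} bipartite graph; the two high--low pairs (high whites to low blacks, and low whites to high blacks) are joined by \emph{clean staircases}; and the low whites $a_{n+1},\dots,a_{2n}$ are joined to the low blacks $b_1,\dots,b_n$ by \emph{precisely the matching encoding $\pi$}.

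The crux is to show that $\phi$ respects this high/low split of each colour class, or at least its essential consequence: each matching edge $a_{n+i}b_{\pi(i)}$ of $f(\pi)$ is carried to a matching edge of $f(\rho)$, i.e. $\sigma(n+i)>\tau(\pi(i))$. This is where the hypothesis $\pi(1)\ne n$ enters. It guarantees that for every $i$ the neighbourhood of $a_{n+i}$ is a \emph{proper} suffix $\{b_{n+i},\dots,b_{2n}\}$ together with the isolated vertex $b_{\pi(i)}$, separated by a \emph{nonempty} gap $\{b_{\pi(i)+1},\dots,b_{n+i-1}\}$ (the gap can be empty only for $i=1$ with $\pi(1)=n$, which is exactly the excluded case). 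Choosing a gap vertex $b_g$ and the witness $a_{\pi(i)+1}$, the four vertices $a_{n+i},b_{\pi(i)},a_{\pi(i)+1},b_g$ induce a $2P_2$ in which $a_{n+i}b_{\pi(i)}$ is the ``low'' edge. I would push this $2P_2$ forward through $\phi$ and analyse its realisations in $f(\rho)$: since a chain graph is $2P_2$-free, such a copy must use a matching edge of $f(\rho)$, and by tracking the inequalities forced by the surrounding (nested, hence image-preserved) suffix/prefix neighbourhoods I would rule out the possibility that $a_{n+i}b_{\pi(i)}$ is absorbed into the chain of $f(\rho)$. Once the split is preserved, the matching quadrant of $f(\pi)$ lands in the matching quadrant of $f(\rho)$, and the assertion $\sigma(n+i)>\tau(\pi(i))$ is immediate, as then the image row index exceeds $m$ while the image column index is at most $m$.

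Granting this, I would set $c_i$ to be the index with $\sigma(n+i)=m+c_i$, so that $\tau(\pi(i))=\rho(c_i)$; the columns $c_1,\dots,c_n$ then form a candidate occurrence of $\pi$ in $\rho$. Two order-preservation statements remain: that $i\mapsto c_i$ is increasing (position order) and that $\pi(i)<\pi(j)\iff\rho(c_i)<\rho(c_j)$ (value order). For the first, within the staircase quadrant joining the low whites to the high blacks the restricted neighbourhoods $\{b_{n+i},\dots,b_{2n}\}$ are strictly nested and decreasing in $i$; inherited inside the image, these nestings pin down the order of $\sigma$ on the low whites, hence force $c_1<\dots<c_n$. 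For the second, the dual staircase quadrant joining the high whites to the low blacks gives strictly nested prefix neighbourhoods $\{a_1,\dots,a_j\}$, which force $\tau$ to be order-preserving on $b_1,\dots,b_n$; since $\tau(\pi(i))=\rho(c_i)$, the value order of $\pi$ is transferred to that of $\rho(c_1),\dots,\rho(c_n)$, completing $\pi\subseteq\rho$.

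I expect the main obstacle to be precisely the middle step. The difficulty is that $\phi$ need not be monotone and that the $2n+2n$ image vertices sit sparsely inside the much larger staircase of $f(\rho)$, so neighbourhood containments are preserved only \emph{relative to the image}, not to the ambient graph; in particular a clean suffix can ``look perturbed'' once restricted to a sparse image, so being in a $2P_2$ does not by itself separate low from high vertices (indeed the witness $a_{\pi(i)+1}$ is itself a high vertex). Controlling the ``noise'' coming from the matching of $\rho$ is therefore the delicate part, and I anticipate that a careful, possibly global rather than purely local, exploitation of the four-quadrant structure will be needed; the condition $\pi(1)\ne n$ is exactly what makes every low vertex of $f(\pi)$ genuinely non-chain and thus gives the argument something robust to latch onto.
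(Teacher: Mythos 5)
Your setup is sound and in fact mirrors the skeleton of the paper's argument: the quadrant decomposition of $f(\pi)$, the reduction of the claim to ``every matching edge $a_{n+i}b_{\pi(i)}$ maps to a matching edge of $f(\rho)$'', and the final transfer of position and value order via private-neighbour counts in the two staircase quadrants are all correct (and the last step can indeed be completed as you sketch: if $i<j$ but $c_i>c_j$, the row $a'_{m+c_i}$ would have only the single private neighbour $b'_{\rho(c_i)}$ with respect to $a'_{m+c_j}$ inside the image, while $a_{n+i}$ needs at least two, namely $b_{n+i}$ and $b_{\pi(i)}$; similarly for $\tau$ on the low blacks). But the middle step, which you correctly identify as the crux and explicitly concede you cannot close, is a genuine gap, and the local tool you propose cannot close it. An induced $2P_2$ in $f(\rho)$ is only guaranteed to use \emph{at least one} matching edge, and nothing in the local picture says it is the image of $a_{n+i}b_{\pi(i)}$ rather than of the witness pair $a_{\pi(i)+1}b_g$: the configuration ``chain edge $a'_pb'_q$ plus matching edge $a'_{m+k}b'_{\rho(k)}$ with $\rho(k)<p\leq q<m+k$'' is induced in $f(\rho)$, so the low edge can perfectly well be absorbed into the chain in any single pushed-forward $2P_2$. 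Moreover, any two matching edges of $f(\rho)$ also induce a $2P_2$, so ``the image contains a matching edge'' carries essentially no localising information. Since the entire definition of $c_i$ and everything after it depends on this step, the proposal as written does not prove the claim.

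What the paper does instead, and what your sketch is missing, is a \emph{global} bookkeeping argument rather than a per-edge one. The key opening move is a pigeonhole via incomparability: the $n$ low blacks $b'_1,\dots,b'_n$ of $f(\pi)$ have pairwise incomparable neighbourhoods, whereas the columns of $f(\rho)$ beyond $m$ have pairwise nested (prefix) neighbourhoods, so at most one of the images $w(1),\dots,w(n)$ can exceed $m$; the dual statement holds for the low whites. Private-neighbour counts (each $b'_{i+1}$ has two private neighbours over $b'_i$) then upgrade this to strict monotonicity, and propagating adjacency constraints between the two parts yields the full interleaving chain $e(1)\leq w(1)<e(2)\leq w(2)<\cdots\leq m<e(n+1)\leq w(n+1)<\cdots<e(2n)\leq w(2n)$, with the four middle vertices $a'_n,b'_n,a'_{n+1},b'_{n+1}$ handled separately -- this is the one place where $\pi(1)\neq n$ is invoked, to force $e(n+1)\geq m+1$ and $w(n)\leq m$ (your use of the hypothesis, to guarantee a nonempty gap for every $2P_2$ gadget, is a different and ultimately unused observation). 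Finally the paper normalises the embedding so that $e(i)=w(i)$ for all $i$ and reads off $\pi\subseteq\rho$ from the matching. So the honest summary is: right framework, right reduction, right endgame, but the decisive step is replaced by an acknowledged placeholder, and the specific local mechanism suggested for it provably cannot do the job on its own.
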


\begin{proof}
Assume $f(\pi) \subseteq f(\rho)$. We denote the vertices of $f(\rho)$ as $A=\{a_1, a_2, \ldots, a_{2m}\}$ and 
$B=\{b_1, b_2, \ldots, b_{2m}\}$ and edges $a_ib_j$ if either $1 \leq i \leq j \leq 2m$ or $m+1 \leq i \leq 2m$ and   $j=\rho (i-m)$.
Also, we denote the vertices of $f(\pi)$ as $A'=(a_1', a_2', \ldots, a_{2n}')$, and $B'=(b_1', b_2', \ldots, b_{2n}')$ 
with edges $a_i'b_j'$ if either $1 \leq i \leq j \leq 2n$ or $n+1 \leq i \leq 2n$ and   $j=\pi (i-n)$. 
The mapping that embeds $f(\pi)$ into $f(\rho)$ as an induced subgraph will be denoted by $a_i' \mapsto a_{e(i)}$, $b_i' \mapsto b_{w(i)}$. 

Firstly, observe that all but at most one entry from the set $\{w(1), w(2), \ldots, w(n)\}$ are less than or equal to $m$. 
Indeed, the vertices $b_{1}', b_{2}',  \ldots, b_{n}'$ have pairwise incomparable neighbourhoods, and this must also be the case for their images; 
however, if $i, j > m$, the neighbourhoods of $b_i$ and $b_j$ are comparable. Moreover, since $b_{i+1}'$ has two private neighbours with respect to $b_i'$ for any
$i \leq n-1$, we must have $w(i) < w(i+1)$ for any $i \leq n-1$, and hence we must have $w(1) < w(2) < \ldots < w(n-1) \leq m$ and $w(n-1) < w(n)$.    
Similarly, we can deduce that $m+1 \leq e(n+2)<e(n+3)<\ldots<e(2n)$ with $e(n+1)<e(n+2)$.  

Now, $a_1', a_2', \ldots, a_{n-2}'$ are adjacent to two vertices $b'_{n-2}, b'_{n-1}$ with $w(n-2) < w(n-1) \leq m$. 
Therefore, we conclude that $\{e(1), e(2), \ldots, e(n-2)\}$ must all be smaller than or equal to $m$.  As $a_1, a_2, \ldots, a_m$ form a chain graph together with the vertices in $B$, in order to have 
$N(a_{e(i)})\supsetneq N(a_{e(j)})$ for $1 \leq i <  j \leq n-2$, we conclude that we must have $1 \leq e(1) < e(2) < \ldots < e(n-2) \leq m$. 
To preserve correct adjacencies between $\{a_1', \ldots, a_{n-2}'\}$ and $\{b'_1, \ldots, b_{n-1}'\}$, we must have  
$$e(1) \leq w(1)<e(2) \leq w(2) < \ldots < e(n-2) \leq w(n-2) < w(n-1) \leq m.$$ 
Now $b_{w(n-1)}$ is already adjacent to $a_{e(1)}, a_{e(2)}, \ldots, a_{e(n-2)}$, but it has to be adjacent to two more vertices, $a_{e(n-1)}$ and $a_{e(n+\pi^{-1}(n-1))}$. 
Clearly, at least one of $e(n+\pi^{-1}(n-1))$ and $e(n-1)$ must be at most $w(n-1)$. Hence there are two cases: 
either both $e(n+\pi^{-1}(n-1))$ and $e(n-1)$ are at most $w(n-1)$, or one of them is at most  $w(n-1)$ and the other is at least $m+1$, in which case $e(n-1)$ is the one that is at most $w(n-1)$, as  $a'_{n-1}$ has a 
private neighbour with respect to $a'_{n+\pi^{-1}(n-1)}$. In either case, we must have $e(n-1) \leq w(n-1)$. 
As $a_{n-1}'$ is non-adjacent to $b_{n-2}'$, we must also have $w(n-2)<e(n-1)$, implying that
$$e(1) \leq w(1)<e(2) \leq w(2) < \ldots < e(n-2) \leq w(n-2) < e(n-1) \leq w(n-1) \leq m.$$ 

By symmetry, we derive that
$$m+1 \leq e(n+2) \leq w(n+2) < e(n+3) \leq \ldots <e(2n) \leq w(2n).$$ 

We are only left with determining the location of the embeddings of the four vertices $a'_n, b'_n, a'_{n+1}$, $b'_{n+1}$. 
Since $\pi(1) \neq n$, we have that $a'_{n+1}$ is not connected to $b'_n$, but connected to $b'_{\pi(1)}$ (with $\pi(1)<n$). 
It follows that $e(n+1) \geq m+1$. Clearly, for $a_{n+1}'$ to have two private neighbours with respect to $a_{n+2}'$ we must also have $e(n+1) < e(n+2)$.
The two private neighbours of $a'_{n+1}$ are $b_{\pi(1)}'$ and $b_{n+1}'$; since $a_{e(n+1)}$ only has one neighbour $b_i$ with $i < e(n + 1)$ (namely $b_{\pi(1)}$), the embedding of $b_{n+1}'$ must satisfy $e(n+1) \leq w(n+1)<e(n+2)$. 
Now $b_n'$, which is not adjacent to $a_{n+1}'$ but adjacent to $a'_{n+\pi^{-1}(n)}$ (note $e(n + \pi^{-1}(n)) \geq m + 1$ since $\pi^{-1}(n) > 1$) must therefore satisfy 
$w(n) \leq m$. As $b_n'$ has two private neighbours with respect to $b_{n-1}'$, we must have $w(n-1) < w(n)$, and as above, the private neighbour $a_n'$
of $b_n'$ must satisfy $w(n-1) < e(n) \leq w(n)$. 
Summarizing, we conclude that
$$e(1) \leq w(1) < \ldots < e(n) \leq w(n) \leq m < m+1 \leq e(n+1) \leq w(n+1) < \ldots < e(2n) \leq w(2n).$$ 

We may now alter this embedding of $f(\pi)$ into $f(\rho)$ if necessary to guarantee that $e(i)=w(i)$ for all $i=1,2, \ldots, 2n$. 
Indeed, it follows from the above inequalities that, for $1 \leq i \leq n$, $a_{e(i)}$ and $a_{w(i)}$ have the same set of neighbours 
among the embedded $b$-vertices, and similarly, for $n + 1 \leq i \leq 2n$, $b_{w(i)}$ and $b_{e(i)}$ have the same set of neighbours 
among the embedded $a$-vertices. We may thus keep the embeddings of $b'_1, \dots, b'_n, a'_{n + 1}, \dots, a'_{2n}$ where they are, 
and move the embeddings of the remaining vertices as appropriate to ensure $e(i) = w(i)$ for $1 \leq i \leq 2n$. 
From this altered embedding, it is easy to see that $\pi \subseteq \rho$ as claimed (for instance, interpret the matching between 
$b_1, \dots, b_m$ and $a_{m + 1}, \dots, a_{2m}$ as a line segment intersection model for $\rho$, and note that the intersection of 
this matching with the embedded graph $f(\pi)$ gives a line segment intersection model for $\pi$). 
\end{proof}

Claim~\ref{claim:3} cannot, in general, be extended to permutations $\pi$ with $\pi(1)=n$ (except trivially, when $n = 1$ or $m = n$). For example, if $\pi=(2,1)$ and $\rho=(1,2,3,4)$, then 
one can easily see that $f(\rho) \supseteq f(\pi)$, but $\rho$ does not contain $\pi$. 
One underlying reason for this phenomenon is that whenever $\pi(1)=n$, the vertices $a_n$ and $a_{n+1}$ have exactly the same neighbourhoods, 
which makes it possible for the graphs to be embedded with more flexibility, not necessarily forcing embedding of permutations. 
For this reason, we introduce a slight modification of the embedding, which allows us to always avoid the case $\pi(1)=n$. 

\begin{definition}
Given a permutation $\pi=(\pi(1), \pi(2), \ldots, \pi(n))$, define $\pi^*=(1, \pi(1)+1, \pi(2)+1, \ldots, \pi(n)+1)$. 
Define $f^*(\pi) = f(\pi^*)$, where $f$ is the map from permutations to quasi-permutation graphs. 
\end{definition}

\begin{theorem}\label{thm:perm}
The mapping $f^*$ is an injection from the class of permutations to the class of quasi-permutation graphs such that 
for any two permutations $\pi$ and $\rho$ we have $f^*(\pi) \subseteq f^*(\rho)$  if and only if  $\pi \subseteq \rho$. 
\end{theorem}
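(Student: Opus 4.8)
The plan is to deduce the theorem from Claim~\ref{claim:3} together with a simple ``prepend a minimum'' lemma for the star operation, so that the only genuinely new content is an elementary fact about pattern containment. Write $n$ and $m$ for the sizes of $\pi$ and $\rho$, so that $\pi^*$ and $\rho^*$ have sizes $n+1$ and $m+1$. The crucial point is the equivalence
\[
\pi \subseteq \rho \quad\Longleftrightarrow\quad \pi^* \subseteq \rho^*,
\]
which I would isolate as a lemma. Granting it, the theorem follows from the chain $f^*(\pi) \subseteq f^*(\rho) \Leftrightarrow f(\pi^*) \subseteq f(\rho^*) \Leftrightarrow \pi^* \subseteq \rho^* \Leftrightarrow \pi \subseteq \rho$, where the middle equivalence uses order-preservation of $f$ in one direction and Claim~\ref{claim:3} in the other. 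Note that the whole purpose of the star operation is that $\pi^*(1) = 1$, so $\pi^*(1) \neq n+1$ whenever $n \geq 1$; this is exactly the hypothesis that makes Claim~\ref{claim:3} applicable to $\pi^*$ and $\rho^*$, thereby sidestepping the exceptional case $\pi(1) = n$ discussed after that claim.

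For the forward direction of the theorem I would argue $\pi \subseteq \rho \Rightarrow \pi^* \subseteq \rho^* \Rightarrow f(\pi^*) \subseteq f(\rho^*)$. The first implication is the easy half of the lemma: given an order-preserving embedding of $\pi$ into $\rho$, extend it by sending the first entry of $\pi^*$ to the first entry of $\rho^*$; since in each starred permutation this new entry is simultaneously the leftmost position and the globally smallest value, the extension is still order-preserving. The second implication is exactly the order-preservation of $f$ already noted in the excerpt.

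For the reverse direction, assume $f^*(\pi) \subseteq f^*(\rho)$, i.e. $f(\pi^*) \subseteq f(\rho^*)$. Comparing part sizes (or degree sequences, as in Claim~2) forces $n+1 \leq m+1$, and since $\pi^*(1) = 1 \neq n+1$, Claim~\ref{claim:3} yields $\pi^* \subseteq \rho^*$. It then remains to prove the hard half of the lemma, namely $\pi^* \subseteq \rho^* \Rightarrow \pi \subseteq \rho$. Here the one subtlety is that an arbitrary embedding of $\pi^*$ into $\rho^*$ need not map the prepended minimum of $\pi^*$ to the prepended minimum of $\rho^*$. I would remove this obstruction by a rerouting argument: the image of $\pi^*(1)$ occupies the leftmost chosen position and carries the smallest chosen value, so it may be re-aimed at position $1$ of $\rho^*$, which holds the global minimum value $1$ and is the global leftmost position, without violating any order constraint. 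With the first entries aligned, deleting them from both permutations leaves exactly the patterns $\pi$ and $\rho$ on the remaining entries, and the restricted embedding witnesses $\pi \subseteq \rho$.

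Finally, injectivity of $f^*$ is immediate, either as the composition of the injection $\sigma \mapsto \sigma^*$ with the injection $f$ (Claim~2), or directly from the established equivalence, since $f^*(\pi) = f^*(\rho)$ gives mutual containment $\pi \subseteq \rho$ and $\rho \subseteq \pi$, forcing $\pi = \rho$. I expect no serious obstacle: the heavy lifting is already done in Claim~\ref{claim:3}, and the remaining work is the short rerouting argument for the reverse half of the lemma, whose only delicate point is checking that re-aiming the prepended element at the global minimum of $\rho^*$ respects the positional and the value orderings simultaneously.
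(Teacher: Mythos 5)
Your proposal is correct and follows essentially the same route as the paper: injectivity via composition, the chain $f^*(\pi) \subseteq f^*(\rho) \Leftrightarrow f(\pi^*) \subseteq f(\rho^*) \Leftrightarrow \pi^* \subseteq \rho^* \Leftrightarrow \pi \subseteq \rho$, with Claim~\ref{claim:3} applicable because $\pi^*(1) = 1$. The only difference is that you spell out, via the rerouting argument, the step $\pi^* \subseteq \rho^* \Rightarrow \pi \subseteq \rho$ that the paper dismisses as ``easy to see''; your argument for it is sound.
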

\begin{proof}
The mapping $f^*$ is a composition of two injective maps $\pi \mapsto \pi^*$ and $\pi^* \mapsto f(\pi^*)$, with the image of the second map being a quasi-permutation graph.
Therefore, $f^*$ is  an injection from the class of permutations to the class of quasi-permutation graphs. 
Further, $f^*(\pi) \subseteq f^*(\rho)$ means, by definition, that $f(\pi^*) \subseteq f(\rho^*)$, which happens if and only if $\pi^* \subseteq \rho^*$
(this follows from Claim~\ref{claim:3} as $\pi^*(1)=1 \neq n$). Finally, it is easy to see that $\pi^*\subseteq \rho^*$ if and only if $\pi \subseteq \rho$, from which the second part of the theorem follows. 
\end{proof}

\section{The structure of quasi-chain graphs} 
\label{sec:struct}

For two graphs $G_1=(V,E_1)$ and $G_2=(V,E_2)$ on the same vertex set we denote by $G_1\otimes G_2$ the graph $G=(V,E_1\otimes E_2)$,
where $\otimes$ denotes the symmetric difference of two sets. The main result in this section is  the following theorem.

\begin{theorem}\label{thm:decomposition}
If a bipartite graph $G=(A,B,E)$ is a quasi-chain graph, then $G=Z\otimes H$ for a chain graph $Z$ and a graph $H$ of vertex degree at most two
such that $E(H)\cap E(Z)$ and $E(H)-E(Z)$ are matchings. Such a decomposition  $G=Z\otimes H$ can be obtained in polynomial time.
\end{theorem}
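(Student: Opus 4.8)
The plan is to exploit the good orderings guaranteed by the quasi-chain property and to build $Z$ and $H$ simultaneously from them. Fix good orderings $(a_1,\ldots,a_\ell)$ of $A$ and $(b_1,\ldots,b_k)$ of $B$, so that $|N(a_j)-N(a_i)|\le 1$ for $i<j$, and similarly for $B$. The idea is to let $Z$ be a ``chain approximation'' of $G$ obtained by declaring $a_i$ adjacent in $Z$ to exactly those $b$'s lying in a suitable initial segment of the $B$-order, so that the neighbourhoods in $Z$ are genuinely nested, and to let $H=Z\otimes G$ record the discrepancy. First I would make precise how to choose the threshold for each $a_i$: since each later vertex loses at most one neighbour of each earlier vertex, the symmetric difference between consecutive neighbourhoods is controlled, and I expect the natural choice to be to set $N_Z(a_i)$ equal to the ``majority'' or ``downward-closed core'' of $N_G(a_i)$ in the $B$-ordering. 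The goal is that the bits of $G$ not captured by this nested structure form the sparse graph $H$.

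Concretely, I would argue that with good orderings in hand, at most one edge per vertex fails to follow the chain pattern, which is exactly what bounds the degree of $H$ by two (one defect coming from the $A$-side ordering and one from the $B$-side ordering). The cleanest route is to mimic the quasi-permutation construction already analysed: there, $Z$ is the chain graph of type (i) edges with perfectly nested neighbourhoods, and $H$ is the matching of type (ii) edges, giving $E(H)\subseteq (A\times B)-E(Z)$, i.e.\ $E(H)\cap E(Z)=\emptyset$ and $E(H)-E(Z)=E(H)$ a matching. For the general statement, I would show that every quasi-chain graph decomposes with $H$ split into two matchings: $E(H)\cap E(Z)$, the edges of $G$ that $Z$ ``over-counts'' (present in $Z$ but absent in $G$), and $E(H)-E(Z)$, the edges present in $G$ but absent in $Z$. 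Each of these should be a matching precisely because a good ordering permits at most one private non-neighbour per earlier vertex, so no vertex can be incident to two ``extra'' or two ``missing'' edges on the same side.

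The main obstacle I anticipate is making the threshold choice globally consistent so that $Z$ is a genuine chain graph (nested on \emph{both} sides simultaneously) while keeping the defect set a pair of matchings. Choosing $N_Z(a_i)$ independently for each $a_i$ via its own core need not yield an ordering that is also nested from the $B$-side; I would need to verify that the thresholds, read off from the two good orderings, are compatible. I expect this to hinge on a careful bookkeeping argument: track, for each vertex, the single possible ``anomalous'' neighbour/non-neighbour relative to its predecessors, and show that assigning these anomalies to $H$ leaves a symmetric, consistently nested remainder $Z$. Once the combinatorial assignment is fixed, the matching property follows from the good-ordering inequality, and degree at most two in $H$ follows since each vertex contributes at most one anomaly from the $A$-ordering and at most one from the $B$-ordering.

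Finally, the polynomial-time claim should be essentially free: a good ordering can be computed by sorting vertices in each part by degree (as noted in the preliminaries, sorting by non-increasing degree yields a good ordering in a $2P_3$-free bipartite graph), and the assignment of edges to $Z$ versus $H$ is a local rule applied to each pair, so the whole decomposition runs in time polynomial in $|V(G)|+|E(G)|$. I would close by verifying $G=Z\otimes H$ directly from the construction, confirming the edge-set identities $E(Z)\otimes E(H)=E(G)$ and the two matching conditions.
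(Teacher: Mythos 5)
Your proposal identifies the right decomposition to aim for, but it does not prove the statement: the step you yourself flag as ``the main obstacle'' --- producing a \emph{single} chain graph $Z$ that is simultaneously nested on both sides while the defect splits into two matchings --- is the entire content of the theorem, and you leave it at ``I would need to verify'' and ``I expect this to hinge on a careful bookkeeping argument.'' The per-pair inequality $|N(a_j)-N(a_i)|\le 1$ does not localise anomalies the way your sketch assumes: the private neighbour of $a_j$ with respect to $a_i$ may be a \emph{different} vertex of $B$ for each $i<j$, so there is no well-defined ``at most one anomalous edge per vertex'' to route into $H$, and a threshold or ``downward-closed core'' chosen independently for each $a_i$ need not produce nested neighbourhoods. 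Moreover, even granting one added edge per $a$-vertex, the condition that $E(H)-E(Z)$ is a matching also forbids two added edges from sharing a $b$-endpoint --- a constraint \emph{across} different $a$-vertices that the good-ordering inequality never addresses. Your appeal to the quasi-permutation construction covers only the very special situation where the matching is entirely disjoint from $Z$ and attached in a rigid pattern; it gives no guidance for reconciling defects in a general quasi-chain graph.

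The paper's proof is structured precisely around this global reconciliation, and nothing in your sketch substitutes for it. The theorem is restated as the existence of an \emph{enhanced letter representation} (a word over $\{a,b\}$ encoding a chain graph, together with a matching of ``top'' edges to add and a matching of ``bottom'' edges to remove), and the proof is by induction on $|V(G)|$. The key combinatorial lemma --- absent from your proposal --- is that every quasi-chain graph or its bipartite complement has a vertex of degree at most $1$; since enhanced representations are preserved under bipartite complementation and part-swapping, one may delete such a vertex $y$, represent $G-y$ inductively, and prepend $y$. The delicate part is showing that the representation of $G-y$ can be modified so that the vertex $x$ to which $y$ is pendant carries no top edge: this is done by a minimality and exchange argument (minimise the distance from $x$ to its top-matched partner, then the number of bottom edges in between), with a case analysis whose contradictions come either from forced induced unbalanced copies of $2P_3$ or from violations of minimality. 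The polynomial-time claim then falls out of iterating these exchange operations, not from a local assignment rule, whose existence your proposal assumes but does not establish.
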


In the proof of this result, we use a word representation for our graphs, which builds on a special case of {\em letter graph representations}, 
introduced in \cite{lettericity} (see Section~\ref{sec:let} for more details). The starting point is as follows: there is a bijective, 
order-preserving mapping between words over the alphabet $\{a, b\}$ (under the subword relation) and coloured chain graphs (under the coloured induced subgraph relation). 
This mapping sends a word $w$ to the graph whose vertices are the entries of $w$, and we have edges between each $a$ and each $b$ appearing after it in $w$. 
See Figure~\ref{fig:letex} for an example (the indices of the letters indicate the order of their appearance in $w$).

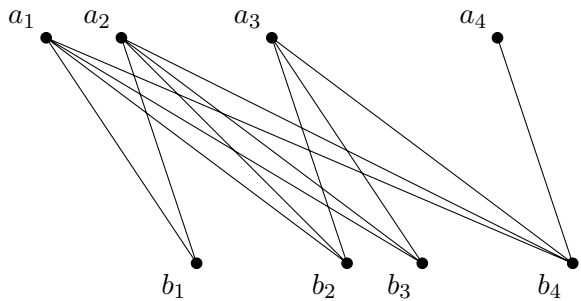
\begin{figure}[ht]
\centering
\begin{tikzpicture}
		\filldraw 
			(0, 3) circle(2pt) node[above left] {$a_1$}
			(1, 3) circle(2pt) node[above left] {$a_2$}
			(2, 0) circle(2pt) node[below left] {$b_1$}
			(3, 3) circle(2pt) node[above left] {$a_3$}
			(4, 0) circle(2pt) node[below left] {$b_2$}
			(5, 0) circle(2pt) node[below left] {$b_3$}
			(6, 3) circle(2pt) node[above left] {$a_4$}
			(7, 0) circle(2pt) node[below left] {$b_4$};
			
		\draw
			(0, 3) -- (2, 0)
			(0, 3) -- (4, 0)
			(0, 3) -- (5, 0)
			(0, 3) -- (7, 0)
			(1, 3) -- (2, 0)
			(1, 3) -- (4, 0)
			(1, 3) -- (5, 0)
			(1, 3) -- (7, 0)
			(3, 3) -- (4, 0)
			(3, 3) -- (5, 0)
			(3, 3) -- (7, 0)
			(6, 3) -- (7, 0);		 
			 
\end{tikzpicture}
\caption{The graph corresponding to the word $w = aababbab$}
\label{fig:letex}	
\end{figure}

We would like to extend this representation to graphs with the structure claimed in Theorem~\ref{thm:decomposition}. 
To do so, we enhance the letter representation described above by allowing bottom edges between pairs $a, b$ with 
the $a$ appearing before the $b$ in $w$ and top edges between pairs $a, b$ with the $a$ appearing after the $b$ in $w$. 
We require, in addition, that the set of top edges forms a matching and the set of bottom edges forms a matching, and interpret 
the bottom edges as an instruction to remove the corresponding matching from the chain graph represented by $w$, 
and the top edges as an instruction to add the corresponding matching. We call such a word an {\em enhanced word}.
For instance, $w'=\underline{aab}a\overline{bba}b$ is an enhanced word obtained from $w=aababbab$ by adding the bottom edge connecting the first $a$ to the first $b$
and the top edge connecting the second $b$ to the last $a$.

If $G$ is the graph described by an enhanced word $w$, we say $w$ is an {\em enhanced letter representation} for $G$.
In particular,  $w'=\underline{aab}a\overline{bba}b$ is an enhanced letter representation of the graph obtained from the graph in Figure~\ref{fig:letex} by removing the edge $a_1b_1$ 
and adding the edge $b_2a_4$. It is immediate from our discussion that Theorem~\ref{thm:decomposition} can be restated as follows.

\begin{theorem} \label{thm:enhanced-let}
Any quasi-chain graph admits an enhanced letter representation that can be found in polynomial time.
\end{theorem}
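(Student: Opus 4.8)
The plan is to prove Theorem~\ref{thm:enhanced-let}, which by the discussion preceding it is equivalent to Theorem~\ref{thm:decomposition}. Let $G=(A,B,E)$ be a quasi-chain graph. By definition, both parts admit good orderings, and as remarked in the preliminaries, one good ordering of one part forces a corresponding one on the other. Concretely, I would fix good orderings $A=(a_1,\dots,a_\ell)$ and $B=(b_1,\dots,b_k)$, taken in non-increasing order of degree as the excerpt suggests, so that $|N(a_j)-N(a_i)|\le 1$ and $|N(b_j)-N(b_i)|\le 1$ for $j>i$. The goal is to interleave the $a$'s and $b$'s into a single word $w$ over $\{a,b\}$ such that, up to a matching added on top and a matching removed at the bottom, the adjacencies of $G$ agree with the chain graph that $w$ encodes (an $a$ joined to every $b$ to its right).

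First I would analyse what a good ordering buys us locally. The condition $|N(a_j)-N(a_i)|\le 1$ for all $i<j$ says that, reading the $a$'s in order, each neighbourhood loses at most one vertex relative to any earlier one; equivalently the symmetric difference of consecutive neighbourhoods is tightly controlled. I expect that from the two good orderings one can read off a ``base'' chain graph $Z$ whose perfect ordering is compatible with the chosen degree orderings, and then measure the deviation $H=G\otimes Z$. The crux is to choose $Z$ so that $H$ has maximum degree at most two and, moreover, splits into the two promised matchings $E(H)\cap E(Z)$ (edges of $G$ to be removed from $Z$, i.e. the bottom edges) and $E(H)-E(Z)$ (edges to be added to $Z$, i.e. the top edges). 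I would argue that each vertex can be incident to at most one ``extra'' non-neighbour forced by the $|\cdot|\le 1$ slack on each side, which caps the degree of $H$ at two and forces the matching structure; the two slacks (one from each good ordering, or equivalently one from excess and one from deficit relative to the chain) naturally separate into the added-matching and removed-matching.

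The concrete construction of the word is where I would spend the most care. Using the degree-sorted orderings, I would interleave the letters so that the underlying chain graph $Z$ represented by $w$ is the one in which $a_i\sim b_j$ exactly when $b_j$ appears to the right of $a_i$; I then place each pair that must be an added edge as a top edge (the $a$ sitting after its matched $b$) and each pair that must be deleted as a bottom edge (the $a$ before its matched $b$). Verifying that the resulting word is a legal enhanced word amounts to checking that top edges form a matching and bottom edges form a matching, which follows from the maximum-degree-two and matching analysis of $H$ above. Finally, since good orderings are obtained simply by sorting each part by degree and the symmetric-difference computation is linear in the size of $G$, the whole decomposition, and hence the enhanced word, is produced in polynomial time.

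I expect the main obstacle to be pinning down the interleaving precisely: showing that a single linear order on $A\cup B$ can be chosen that simultaneously realises the chain part $Z$ and places every deviation as either a valid top edge or a valid bottom edge without two deviations colliding at the same vertex (which would violate the matching requirement). In other words, the hard step is proving that the two unit-slack conditions, one per part, genuinely decouple into two independent matchings rather than accumulating at a single vertex. I anticipate that a careful case analysis of consecutive neighbourhoods in the good orderings, isolating exactly which vertex is the ``one allowed extra non-neighbour'' at each step, will resolve this and simultaneously yield the $\Delta(H)\le 2$ bound.
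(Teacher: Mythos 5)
Your proposal correctly identifies the target -- a chain graph $Z$ with $G\otimes Z$ of maximum degree at most two, splitting into a top matching and a bottom matching, i.e.\ Theorem~\ref{thm:decomposition} -- but it never establishes that such a $Z$ exists, and the step you defer is precisely where the theorem's entire difficulty lives. The claim that ``each vertex can be incident to at most one extra non-neighbour forced by the $|\cdot|\le 1$ slack on each side'' does not follow from the definition of a good ordering: the condition $|N(a_j)-N(a_i)|\le 1$ is a constraint on each ordered \emph{pair}, and the single private neighbour of $a_j$ with respect to $a_i$ may vary with $i$. Relative to any one fixed chain graph $Z$, a single vertex can therefore a priori require several added or several deleted edges, so the bound $\Delta(H)\le 2$ and the decoupling into two matchings are not local consequences of the slack condition. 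Moreover, ``the'' base chain graph determined by the degree orderings is not well defined -- many chain graphs are compatible with the same pair of good orderings, and a wrong choice ruins the matching structure -- so the construction of $w$ you sketch presupposes exactly the statement to be proved. You candidly flag this in your final paragraph (``the hard step is proving that the two unit-slack conditions \dots genuinely decouple'') and anticipate that a case analysis will resolve it, but that analysis is never carried out; as written, the proposal is a plan whose crux is an unproved claim, and the polynomial-time assertion inherits the same gap.

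For comparison, the paper does not extract $Z$ from the good orderings at all. It argues by induction on the number of vertices: a preliminary claim shows that $G$ or its bipartite complement has a vertex $y$ of degree at most one (proved by taking a maximum-degree vertex $a_1$ and using $2P_3$-freeness to show two of its non-neighbours cannot both have degree two), and since enhanced representations are preserved under bipartite complementation and part-swapping, one may assume $y\in B$ is pendant to some $x$. The inductive representation $w'$ of $G-y$ is then modified so that $x$ carries no top edge -- this is the technical heart, handled by an extremal argument: among all representations, minimise the distance from $x$ to its top-matched partner, then the number of bottom edges in that interval, and derive contradictions through local word-rewriting moves (swapping adjacent letters, sliding top/bottom edges between twins), where each obstruction yields an induced unbalanced $2P_3$ \emph{using the pendant vertex $y$ itself}. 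Once $x$ is top-free, prefixing $b$ with a top edge to $x$ extends the representation to $G$. If you wish to salvage a direct, non-inductive construction, you would need to prove something like ``among all chain graphs compatible with a given pair of good orderings, some choice has symmetric difference with $G$ equal to two matchings,'' and your sketch offers no mechanism to prevent the accumulation of deviations at a single vertex that you yourself identify as the obstacle.
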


\begin{proof}
		At the core of our proof is an induction on the number of vertices of the quasi-chain graph $G$.
		The base case of the induction is trivial. To develop an inductive step, we prove the following claim.
		
		\begin{claim}\label{lem:deg1}
			Let $G = (A, B, E)$ be a quasi-chain graph. Then either $G$ or its bipartite complement has a vertex of degree at most 1. 
		\end{claim}
		
		\begin{proof}
			Let $a_1, \dots, a_t$ be the vertices of $A$ in a non-increasing order of their degrees. 
			If $a_1$ has fewer than 2 non-neighbours, we are done (since $a_1$ then has degree at most one in the bipartite complement). 
			Otherwise, let $b, b'$ be two non-neighbours of $a_1$. Note that $b$ and $b'$ have no common neighbour: 
			if $a$ was a common neighbour, then it would have two private neighbours with respect to $a_1$; 
			since $2P_3$s are forbidden, $a$ would be adjacent to all but at most one of the neighbours of $a_1$, from which $\deg(a) > \deg(a_1)$, contradicting our premise. 
			But then at least one of $b$ and $b'$ has degree at most one, since otherwise an induced $2P_3$ appears.
		\end{proof}

		Since the existence of enhanced letter representations is invariant under bipartite complementation and reflection (swapping the parts), 
		we may assume, by reflecting and complementing if necessary, that $G = (A, B, E)$ has a vertex $y$ of degree at most 1, and that $y \in B$. 
		
		Now our induction hypothesis says that $G' := G[A \cup (B - \{y\})]$ admits an enhanced letter representation $w'$. 
		If $y$ is isolated in $G$, we may always produce a representation $w$ for $G$ by adding $b$ as a prefix to $w'$. 
		The difficult case is when $y$ has degree 1 in $G$. Even then, we may easily produce a representation for $G$ 
		by adding $b$ as a prefix to $w'$ and linking it with a top edge to (the letter corresponding to) the vertex 
		$x$ that $y$ is pendant to, {\em provided that $x$ does not already have an incident top edge in $w'$}. 
		In the rest of the proof we show that $G'$ admits an enhanced letter representation in which $x$ is not incident to a top edge.

		To show this, we first observe that the mapping from enhanced letter representations to graphs is not injective.
		As a very simple example, the enhanced words $ab$ and $\overline{ba}$ both represent the complete graph on two vertices, 
		while $ba$ and $\underline{ab}$ both represent the edgeless graph on two vertices. 
		In general, we may swap the above pairs when the two letters appear next to each other. 
		We may also swap consecutive instances of the same letter, carrying over the top/bottom edges incident to them, 
		e.g., we may go from $\overline{ba}aa\underline{ab}$ to $\overline{b\underline{aaa}}\underline{\overline{\vphantom{b}aa}b}$ and vice-versa.

		To prove the result, we assume, by contradiction, that in any enhanced letter representation of $G'$ vertex $x$ 
		is incident to a top edge. Among all representations of $G'$, look at the ones that minimise the distance between $x$ and its top-matched neighbour. 
		Among those representations, pick one where the interval between $x$ and its top-matched neighbour has the minimum number of bottom edges. 
		Write $w^*$ for this representation, and denote by $y'$ the vertex top-matched to $x$. Given two letters $\alpha$ and $\beta$ in $w^*$ (two vertices in $G'$),
		we write $\alpha<\beta$ to indicate that $\alpha$ appears before  $\beta$ in the word, and denote by $\alpha-\beta$ the interval of letters (vertices) that appear {\it strictly between}
		$\alpha$ and $\beta$ in $w^*$. In particular, $y'<x$, since $y' \in B$, $x \in A$ and they are top-matched.
		We now derive a number of conclusions about the interval $y'-x$.
		
		\begin{itemize}
			\item[(1)] {\em The interval $y'-x$ is not empty}, since otherwise we could remove the top edge by 
			swapping $y'$ and $x$, and due to its minimality, {\em this interval starts with an $a$, which we denote $a^*$, and ends with a $b$, which we denote $b^*$}.
			
			\item[(2)] {\em The interval $y'-x$ does not contain $abb$ as an enhanced subword},	since otherwise the vertices corresponding to the $abb$ together with the vertices $x, y$ and $y'$ induce a $2P_3$ in $G$.
			
			\item[(3)] {\em The interval $y'-x$ contains at most two $b$s,} which follows directly from (1) and (2).
		\end{itemize}
		\noindent
		To obtain a contradiction, we analyze the following two cases. 
		
		\medskip
		{\it Case 1: $a^*$ and $b^*$ are not bottom-matched.} Then there is no $b$ in the interval $a^*-b^*$. Indeed, if $b'$ belongs to this interval, 
		then, according to (2), $a^*$ is bottom-matched to $b'$. However, this contradicts the choice of $w^*$, because,
		according to (3), this bottom edge can be removed by bringing $a^*$ next to $b'$ and swapping them. 
		In a similar way, in the absence of a second $b$, any bottom edge can be removed from the interval $y'-x$, implying that this interval has no bottom edges.     
		
		We note that at least one of $b^*$ and $x$ must have a bottom-matched neighbour, 
		since otherwise we could reduce the interval by swapping $b^*$ and $x$ and introducing the bottom edge between them.
		If $x$ has a bottom-matched neighbour, then $x, y, y'$ together with $a^*, b^*$ and the bottom-matched neighbour of $x$ induce a $2P_3$.
		Therefore, $b^*$ has a bottom-matched neighbour $a'$ with $a'<y'$. 
		
		We also note that at least one of $a^*$ and $b^*$ must have a top-matched neighbour, 
		since otherwise we could bring $a^*$ next to $b^*$, swap them by introducing a top edge, and then reduce the interval by swapping $a^*$ and $x$.
		If $b^*$ has a top-matched neighbour, then $y', a', x$ together with $b^*, a^*$ and the top-matched neighbour of $b^*$ induce a $2P_3$. 
		If $a^*$ is has a top-matched neighbour, then $x, y, y'$ together with $a^*, b^*$ and the top-matched neighbour of $a^*$ induce another $2P_3$.
		
		\medskip
		{\it Case 2: $a^*$ and $b^*$ are bottom-matched.} Clearly, the interval $a^*-b^*$ is not empty, since otherwise we could remove the bottom edge by swapping  $a^*$ and $b^*$.
		Also, to avoid an easy reduction to Case 1, we conclude that the letter to the right of $a^*$ is a $b$ (we denote it by $b^\circ$), and 
		the letter to the left of $b^*$ is an $a$ (we denote it by $a^\circ$). 
		
		We note that either $a^*$ or $b^\circ$ is incident to a top edge, since otherwise we could swap them by introducing the top edge $\overline{b^\circ a^*}$
		and then reduce the interval $y'-x$ by swapping $y'$ and $b^\circ$. Similarly, at least one of $a^\circ$ and $b^*$ is incident to a top edge.
		
		If $a^*$ is incident to a top edge, then $x,y,y'$ together with $a^*,b^\circ$ and a top-matched neighbour of $a^*$ induce a $2P_3$.
		If $a^\circ$ is incident to a top edge, then $x,y,y'$ together with $a^\circ,b^*$ and a top-matched neighbour of $a^\circ$ induce a $2P_3$.
		Therefore, $b^\circ$ is top-matched with a vertex $a'$ and $b^*$ is incident to a top edge. 
		We can assume that $x<a'$, since otherwise we could remove the top edge between $b^\circ$ and $a'$ by bringing them next to each other and swapping.  
		But then $a^*,b^\circ,a'$ together with $a^\circ,b^*$ and a top-matched neighbour of $b^*$ induce a $2P_3$. 
		
		\medskip
		A contradiction in all cases shows that $G'$ admits an enhanced letter representation in which $x$ is not incident to a top edge and 
		completes the inductive step.

		\medskip
		
		Our case analysis leads to a polynomial-time procedure for removing, if necessary, the top edge incident to $x$, which can be outlined as follows. 
		The contradictions involving the appearance of a $2P_3$ concern cases that do not actually occur when we apply our procedure, so we ignore them. 
		When a contradiction to the minimality in the construction of $w^*$ appears in the case analysis, 
		we repeatedly execute the operation that lead to the contradiction -- we only need to iterate a linear number of times. 
		We invariably arrive at the situation where $y'$ and $x$ appear next to each other, and we simply swap them to remove the top edge.
\end{proof}

To conclude the section, we observe that the converse to Theorem~\ref{thm:enhanced-let} does not hold. In particular, $2P_3$ has 8 different 
enhanced letter graph representations (4 per colouring), up to moving the top/bottom edges between twin vertices.


\section{Well-quasi-orderability and lettericity in the class of quasi-chain graphs} 
\label{sec:let}

Let $(X, \leq)$ be a poset. As a quick refresher, a {\em chain} is a set of pairwise comparable elements, 
and an {\em antichain} is a set of pairwise incomparable elements. 
$X$ is said to be {\em well-quasi-ordered} by $\leq$ (``wqo'' for short) if there are no infinite strictly descending chains, 
and no infinite antichains in $(X, \leq)$.\footnote{We note that the condition on strictly descending chains 
is trivially satisfied for finite graphs, so it suffices to investigate the presence of infinite antichains.} 
Well-quasi-orderability in the universe of graphs has received much attention, culminating in the celebrated 
result of Robertson and Seymour that graphs are wqo by the minor relation \cite{robertson-seymour}. 
When considering the induced subgraph relation instead, finding infinite antichains is easy (the cycles are an example). 
However, the story is far from over: a challenging problem is to characterise those hereditary classes that are wqo. 
The last few decades have witnessed a slow but steady effort in this direction (see, for instance, \cite{cographs, wqo, bigenic, lettericity}). 

It is shown in \cite{wqo} that quasi-chain graphs are not wqo under the induced subgraph relation 
(and indeed, this also follows directly from Theorem~\ref{thm:perm}, since permutations are not wqo -- see, e.g., 
\cite{antichain}). We start this section by providing a simple, explicit example of an infinite antichain in this class, 
which is independent of the relationship between quasi-chain graphs and permutations.
 
Let $Z_n$ be the universal chain graph on $2n$ vertices, with the labelling given in Figure~\ref{fig-chain}. 
Now let $Q_n$ be the graph obtained from $Z_n$ by deleting all edges of the form $(a_i, b_{i + 1})$ 
(those edges form a matching), then adding a pendant vertex to each of $a_1$ and $b_n$, as shown in Figure~\ref{fig-qgraph}.

\begin{figure}[ht]
	\centering
	\begin{subfigure}[t]{1\linewidth}
		\centering
		\begin{tikzpicture}[scale=1, transform shape]
			\foreach \i in {1,...,6} {
				\filldraw (\i * 2, 0) circle (2pt) node[below right]{$b_{\i}$};
				\filldraw (\i * 2, 2) circle (2pt) node[above left]{$a_{\i}$};
				\foreach \x in {\i,...,6} {
					\draw (\i * 2, 2) -- (\x * 2, 0);
				}
			}
		\end{tikzpicture}
		\captionsetup{justification=centering}
		\caption{The universal chain graph $Z_6$}
		\label{fig-chain}	
	\end{subfigure}	
	
	\bigskip
	
	\bigskip
	
	\begin{subfigure}[t]{1\linewidth}
		\centering
		\begin{tikzpicture}[scale=1, transform shape]
			\foreach \i in {1,...,6} {
				\draw (\i * 2, 2) -- (\i * 2, 0);
				\filldraw (\i * 2, 0) circle (2pt) node[below right]{$b_{\i}$};
				\filldraw (\i * 2, 2) circle (2pt) node[above left]{$a_{\i}$};
			}
			
			\foreach \i in {1,...,4} {
				\foreach \x in {\i,...,4} {
					\draw (\i * 2, 2) -- (4 + \x * 2, 0);
				}
			}
			\filldraw (1, 0) circle (2pt) node[below left]{$b'_{1}$};
			\filldraw (13, 2) circle (2pt) node[above right]{$a'_{6}$};
			\draw (13, 2) -- (12, 0);
			\draw (2, 2) -- (1, 0);
		\end{tikzpicture}
		\captionsetup{justification=centering}
		\caption{The graph $Q_6$ obtained from it}
		\label{fig-qgraph}	
	\end{subfigure}
	\caption{An infinite antichain of quasi-chain graphs}
\end{figure}
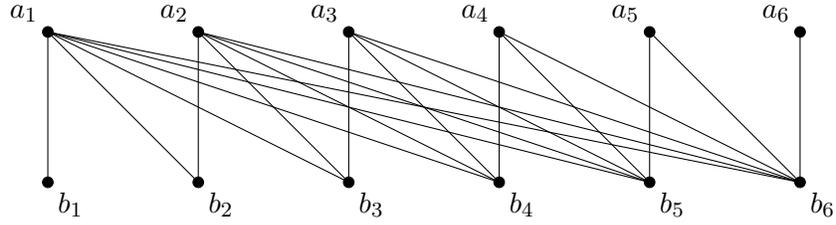
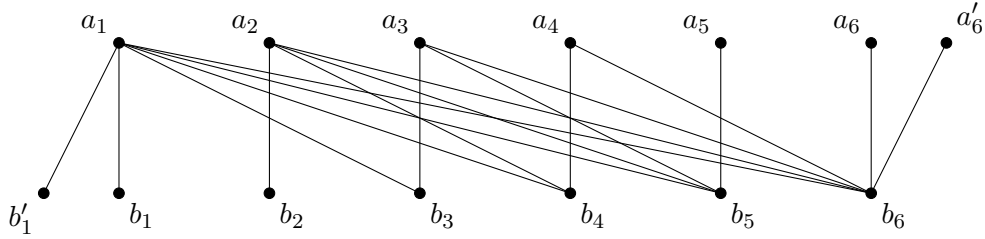

\begin{lemma}
	$(Q_k)_{k \geq 4}$ is an infinite antichain of quasi-chain graphs with respect to the induced subgraph relation.
\end{lemma}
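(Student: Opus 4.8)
The plan is to establish the two defining properties in turn: that each $Q_k$ is a quasi-chain graph, and that the family is pairwise incomparable under the induced subgraph relation. For the quasi-chain property, I would exhibit good orderings of the two parts directly. Writing out the neighbourhoods of $Q_k$, one sees that $a_i$ is adjacent to $b_i$ and to $b_{i+2}, \ldots, b_k$ (the deleted matching removes exactly $b_{i+1}$), with $a_1$ additionally adjacent to its pendant $b_1'$, and with $a_k, a_k'$ both adjacent only to $b_k$. Ordering the $A$-side as $a_1, a_2, \ldots, a_k, a_k'$, one checks $|N(a_j) - N(a_i)| \le 1$ for all $i < j$ directly: consecutive vertices differ only by the single ``hole'' left by the deleted matching edge, and non-consecutive ones are nested. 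The $B$-side is handled symmetrically; in fact $Q_k$ admits a part-swapping automorphism $\sigma$ sending $a_i \leftrightarrow b_{k+1-i}$ (and interchanging the two pendants), so it suffices to treat one part and both parts then have good orderings.

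For the antichain property, since $|V(Q_k)| = 2k+2 < 2l+2 = |V(Q_l)|$ whenever $k < l$, we have $Q_l \not\subseteq Q_k$ for free, and it remains to rule out $Q_k \subseteq Q_l$ for $4 \le k < l$. The two structural features I would exploit are: (a) $Q_n$ has exactly two vertices of degree $n$, namely $a_1$ and $b_n$, and these are precisely the two vertices incident to a pair of true twins (the leaves $b_1, b_1'$ at $a_1$ and $a_n, a_n'$ at $b_n$); I would use $a_1, b_n$ as anchors. And (b) the vertices $b_2, b_3, \ldots, b_n$ on the $B$-side satisfy, for $2 \le j \le n-1$, that $b_{j+1}$ has two private neighbours ($a_{j-1}$ and $a_{j+1}$) with respect to $b_j$, and symmetrically $a_1, \ldots, a_{n-1}$ form such a chain on the $A$-side.

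Property (b) is exactly the configuration exploited in the proof of Claim~\ref{claim:3}: given an induced embedding $\phi\colon Q_k \hookrightarrow Q_l$, the two-private-neighbour relations force the images of these chain vertices to occur in the host in the same relative order, so that $\phi$ is order-preserving along each side (after, if necessary, composing with the automorphism $\sigma$ to fix the orientation, which deals with the fact that the relation is uncoloured and may swap the two sides). With order-preservation established, I would use the twin-leaf markers to locate the endpoints: the twin pair $\{b_1, b_1'\}$ marks the ``left'' extreme and $\{a_k, a_k'\}$ the ``right'' extreme of $Q_k$, and an order-preserving embedding carrying these markers, together with the requirement that $\phi(a_1)$ and $\phi(b_k)$ realise degree $k$ and each carry two genuine twins plus a full attached chain, forces $\phi(a_1), \phi(b_k)$ to be the genuine degree-$l$ anchors $a_1, b_l$ of the host rather than any interior high-degree vertex. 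The image must then span all $l$ positions of $Q_l$ while being an order-preserving injection from only $k$ positions, from which I would conclude $k = l$, a contradiction.

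I expect the main obstacle to be this final step: rigorously forcing the embedding to reach both extremes of $Q_l$. Order-preservation alone would, a priori, allow $Q_k$ to sit inside an interior ``window'' of $Q_l$; what excludes this is that each anchor of $Q_k$ carries two genuine twins \emph{and} the maximal-length two-private-neighbour chain, a combination that can be realised in $Q_l$ only at its two ends, where the host's own pendants supply the extra leaf needed to reach the anchor degree. Making this precise while simultaneously keeping track of the twin ambiguities and the part-swapping symmetry is where essentially all the care lies; the quasi-chain verification and the cardinality bound are routine by comparison.
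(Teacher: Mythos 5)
Your structural observations are all accurate (the neighbourhoods in $Q_k$, the part-swapping isomorphism $a_i \leftrightarrow b_{k+1-i}$, the two-private-neighbour chains, and the fact that $a_1$ and $b_n$ are the only degree-$n$ vertices and the only ones carrying a twin pair of leaves), and your quasi-chain verification is the same as the paper's. But the antichain half has a genuine gap, sitting exactly where you flag it, and the inference you offer to close it is not sound as stated. First, ``order-preserving along each side and anchored at both ends'' does not imply that the image spans all $l$ positions: order-preserving injections of $\{1,\dots,k\}$ into $\{1,\dots,l\}$ fixing both endpoints exist for every $k \leq l$, so nothing you have written rules out a sparse embedding. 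What actually forbids sparseness is the deleted matching: the guest non-edge $a_ib_{i+1}$, combined with the edges $a_ib_i$ and $a_{i+1}b_{i+1}$ and the host rule that $\alpha_r \not\sim \beta_s$ with $r \leq s$ only when $s = r+1$, forces $w(i+1) = e(i)+1$ and then $e(i+1) = e(i)+1$; that is, the image must be a \emph{consecutive window} of $Q_l$. This step is absent from your write-up. Second, your anchoring claim ($\phi(a_1) = \alpha_1$, $\phi(b_k) = \beta_l$) is asserted via ``degree $k$ plus twins plus chain'' but never proved; degrees are not preserved by induced embeddings, and the guest twins $b_1, b'_1$ need not map to host leaves -- they only need to avoid adjacency to the other image vertices. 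The repair again goes through the window: if the window starts at $c \geq 2$, the only host $B$-vertex adjacent to $\alpha_c$ but to none of $\alpha_{c+1}, \dots, \alpha_{c+k-1}$ is $\beta_c$ (here $k \geq 3$ is used, since $\beta_{c+2}$ is adjacent to $\alpha_{c+2}$), so there is no room for \emph{two} twins; hence $c = 1$, where $\beta'_1$ supplies the second candidate, and symmetrically the window must end at $l$, giving $k = l$. So your route is completable, but these two arguments are the proof, not bookkeeping to be supplied later.

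For comparison, the paper's proof avoids both global order-preservation and two-sided anchoring. It anchors once: among ordered pairs of host $\alpha$-vertices with incomparable neighbourhoods, $(\alpha_1, \alpha_2)$ is the \emph{unique} pair in which the first vertex has three private neighbours with respect to the second (namely $\beta'_1, \beta_1, \beta_3$), which forces $\iota(a_1) = \alpha_1$ and $\iota(a_2) = \alpha_2$ outright. It then propagates deterministically -- $\beta_2$ is the only $\beta$-vertex non-adjacent to $\alpha_1$, then $\alpha_3$ is the only remaining available neighbour of $\beta_3$, and so on -- concluding $\iota(a_i) = \alpha_i$ and $\iota(b_i) = \beta_i$ for all $i \leq m$, which is possible only if $m = n$ (otherwise $a'_m$ has no available image adjacent to $\beta_m$ alone). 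This one-anchor propagation sidesteps both of the issues above; your windowing argument, once completed as indicated, is a legitimate alternative of the same rigidity flavour, but it requires strictly more care than the paper's version.
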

\begin{proof}
	First, note that the graphs are indeed quasi-chain. This follows from the fact that the ordering $a_1, a_2, \dots, a_n, a'_n$ is good 
(and, by symmetry, so is $b_n, b_{n - 1}, \dots, b_1, b'_1$). Indeed, for $i < j$, $a_j$ has at most one private neighbour with respect to $a_i$, namely $b_j$.
	
	To see that the sequence $(Q_k)_{k \geq 4}$ is an antichain, let $4 \leq m \leq n$, 
and label the vertices of $Q_m$ as in Figure~\ref{fig-qgraph}, and the vertices of $Q_n$ by replacing $a$s with $\alpha$s and $b$s with $\beta$s. 
Suppose $\iota : Q_m \to Q_n$ is an induced subgraph embedding. By symmetry and connectedness of $Q_m$, we may assume $\iota$ maps $a$-vertices to $\alpha$-vertices and $b$-vertices to $\beta$-vertices, respectively. 

Among ordered pairs of $\alpha$-vertices with incomparable neighbourhoods, $(\alpha_1, \alpha_2)$ is the {\em only} one where the first vertex has 3 private neighbours with respect to the second. 
This fact immediately forces $\iota(a_1) = \alpha_1$ and $\iota(a_2) = \alpha_2$. But then  
\begin{itemize}
\item[] $\iota(b_2) = \beta_2$, since $\beta_2$ is the only $\beta$-vertex non-adjacent to $\alpha_1$, implying that
\item[] $\iota(b_3) = \beta_3$, since otherwise the image of $b_3$ has no candidate neighbour for the image of $a_3$, implying that $b_1,b'_1$ are mapped to $\beta_1,\beta'_1$, implying that
\item[] $\iota(a_3) = \alpha_3$, since $\alpha_3$ is the only neighbour of $\beta_3$ among not yet mapped vertices, implying that 
\item[] $\iota(b_4) = \beta_4$, since $\beta_4$ is the only $\beta$-vertex non-adjacent to $\alpha_3$ among not yet mapped vertices, etc.
\end{itemize}
Proceeding in this way, we conclude that $\iota(a_i) = \alpha_i$ and $\iota(b_i) = \beta_i$ for all $i\le m$, which is possible only if $m=n$.
\end{proof}

Knowing that the full class of quasi-chain graphs is not wqo, a natural question is to determine exactly what the obstacles to wqo are in this class. 
This is a challenging problem and as a first step towards its solution we analyze  the {\em lettericity} of quasi-chain graphs. In the context of wqo, 
the importance of this parameter is due to the fact that bounded lettericity implies wqo by induced subgraphs \cite{lettericity}. The parameter is defined as follows.

Let $\Omega$ be a finite alphabet and ${\cal P}\subseteq \Omega^2$ a set of ordered pairs of symbols from $\Omega$, called the {\it decoder}.
To each word $w=w_1w_2\cdots w_n$ with $w_i\in \Omega$ we associate a graph $G({\cal P},w)$, called the {\it letter graph}
of $w$, by defining $V(G({\cal P},w))=\{1,2,\ldots ,n\}$ with $i$ being adjacent to $j>i$ if and only if 
the ordered pair $(w_i,w_j)$ belongs to the decoder $\cal P$. 

It is not difficult to see that every graph $G$ is a letter graph in an alphabet of size at most $|V(G)|$ over an appropriate decoder $\cal P$.
The minimum $k$ such that $G$ is a letter graph in an alphabet of $k$ letters is the {\it lettericity} of $G$ and is denoted
$\lett(G)$. A graph is a $k$-letter graph if its lettericity is at most $k$.  

\medskip

In what follows, the class of graphs of vertex degree at most 1 (that is, induced matchings) plays an important role, and so does the class of their bipartite complements. 
We denote those classes by $\mathcal{M}$ and $\widetilde{\mathcal M}$ respectively. 

We will need a few basic facts about lettericity that we summarise here without proof (all of those facts are shown in \cite{lettericity}, except the minimality in Fact~\ref{fact4} -- can be easily shown directly).

\begin{fact} \label{fact1}
	Any class of graphs of bounded lettericity is wqo.
\end{fact}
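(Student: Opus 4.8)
The plan is to reduce the statement to Higman's lemma on words over a finite alphabet. Fix $k$ as the bound on lettericity in the class. First I would normalise the representations: after renaming symbols, every $k$-letter representation may be taken over the common alphabet $\Omega = \{1, \ldots, k\}$ (padding with unused letters if fewer are needed). Since a decoder is merely a subset of $\Omega^2$, there are only finitely many decoders $\mathcal{P} \subseteq \Omega^2$ — at most $2^{k^2}$ of them. Thus the class of all graphs of lettericity at most $k$ is contained in a \emph{finite} union, over decoders $\mathcal{P}$, of the families $\{G(\mathcal{P}, w) : w \in \Omega^*\}$.

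The key structural observation is that, for a \emph{fixed} decoder $\mathcal{P}$, the map $w \mapsto G(\mathcal{P}, w)$ is order-preserving from the subword (subsequence) order on $\Omega^*$ to the induced subgraph order on graphs: if $u$ is obtained from $w$ by deleting some letters, then $G(\mathcal{P}, u)$ is exactly the subgraph of $G(\mathcal{P}, w)$ induced by the surviving positions. This is immediate from the definition of a letter graph, since adjacency between two positions depends only on their letters and their left-to-right order, both of which are inherited by any subword.

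With these two pieces in place, I would rule out infinite antichains (which, by the footnote, suffices, as finite graphs admit no infinite strictly descending chain). Suppose $G_1, G_2, \ldots$ is an infinite sequence of graphs in the class, and choose for each a representation $(\mathcal{P}_n, w_n)$ with $\mathcal{P}_n \subseteq \Omega^2$. Since there are finitely many decoders, by the pigeonhole principle some decoder $\mathcal{P}$ recurs infinitely often; restrict to the corresponding infinite subsequence of words over the finite alphabet $\Omega$. Now I would invoke Higman's lemma: the set $\Omega^*$, ordered by the subword relation, is well-quasi-ordered. Hence there are indices $i < j$ in the subsequence with $w_i$ a subword of $w_j$, and by the order-preserving observation $G_i$ is an induced subgraph of $G_j$. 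This produces a comparable pair, so no infinite antichain exists and the class is wqo.

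The one genuinely nontrivial import is Higman's lemma itself, which carries the combinatorial weight; everything else is bookkeeping. The only point demanding care is the reduction to a single common alphabet together with the finiteness of the decoder set, which is what makes the pigeonhole step apply to finitely many sub-families. Once the decoder is fixed, the subword-to-induced-subgraph monotonicity needs no further argument.
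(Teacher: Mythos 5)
Your proof is correct and matches the standard argument: the paper states this fact without proof, deferring to Petkov\v sek \cite{lettericity}, and the proof there is precisely your reduction --- normalise to a common $k$-letter alphabet, pigeonhole over the finitely many decoders $\mathcal{P} \subseteq \Omega^2$, observe that $w \mapsto G(\mathcal{P}, w)$ is order-preserving from the subword order to the induced subgraph order, and invoke Higman's lemma. All steps, including the footnoted dismissal of infinite descending chains, are handled properly.
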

\begin{fact} \label{fact2}
	For any graph $G$ and vertex $x$ of $G$, $\lett(G) \leq 2\lett(G - x) + 1$.
\end{fact}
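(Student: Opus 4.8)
The plan is to take an optimal letter representation of $G - x$ and graft $x$ onto the front of its word, paying for a doubling of the alphabet (to tell neighbours of $x$ apart from non-neighbours) plus one fresh letter for $x$ itself. Writing $k = \lett(G - x)$, I would fix an alphabet $\Omega$ with $|\Omega| = k$, a decoder $\mathcal{P} \subseteq \Omega^2$, and a word $w = w_1 \cdots w_{n-1}$ realising $G - x$, so that position $i$ encodes a vertex $v_i$ of $G - x$ and $v_i \sim v_j$ (for $i < j$) holds precisely when $(w_i, w_j) \in \mathcal{P}$.

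The construction I would use lives over the enlarged alphabet $\Omega' = \{\alpha_0, \alpha_1 : \alpha \in \Omega\} \cup \{\sigma\}$, of size $2k + 1$. I would place $x$ first, labelled by the new symbol $\sigma$, and then reuse $w$ verbatim except that each letter is tagged by its relationship to $x$: position $i$ becomes $(w_i)_1$ if $v_i \in N_G(x)$ and $(w_i)_0$ otherwise. The decoder $\mathcal{P}'$ is then asked to do two independent jobs. To preserve the internal structure of $G - x$, I would put $(\alpha_\epsilon, \beta_\delta) \in \mathcal{P}'$ exactly when $(\alpha, \beta) \in \mathcal{P}$, so the tags are ignored among old letters. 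To encode the neighbourhood of $x$, I would put $(\sigma, \alpha_\epsilon) \in \mathcal{P}'$ iff $\epsilon = 1$, and never place $\sigma$ on the right of a pair, since nothing precedes it in the word. Checking correctness is then routine: two old vertices keep their former adjacency because $\mathcal{P}'$ forgets the tags, while $x$ is adjacent to $v_i$ iff $(\sigma, (w_i)_{\epsilon_i}) \in \mathcal{P}'$, iff $\epsilon_i = 1$, iff $v_i \in N_G(x)$. Since $|\Omega'| = 2k + 1$, this yields $\lett(G) \le 2\lett(G - x) + 1$.

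The one genuine design point — and the reason the bound reads $2k + 1$ rather than $k + 1$ — is that once $x$ sits at the front, all of its adjacencies are governed by the single family of decoder pairs $(\sigma, \cdot)$; a lone new letter therefore cannot separate two old vertices that share a base letter but disagree on their adjacency to $x$. Splitting each $\alpha$ into $\alpha_0$ and $\alpha_1$ is exactly what buys this separation, and the only thing I expect to verify carefully is that this refinement leaves the adjacencies inside $G - x$ untouched — which is precisely what forcing $\mathcal{P}'$ to ignore the tags on pairs of old letters guarantees. Everything past that is bookkeeping.
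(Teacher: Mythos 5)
Your proof is correct: the construction you give (doubling each letter of an optimal representation of $G-x$ to record adjacency to $x$, prepending one fresh letter $\sigma$ for $x$, and lifting the decoder so that old pairs ignore the tags while $(\sigma,\alpha_\epsilon)$ decodes to an edge exactly when $\epsilon=1$) is precisely the standard alphabet-doubling argument, and your verification of both kinds of adjacencies is complete. The paper states this fact without proof, deferring to \cite{lettericity}, where this same construction appears, so your approach coincides with the intended one.
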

\begin{fact} \label{fact3}
	Chain graphs have lettericity at most 2 (see Section~\ref{sec:struct}).
\end{fact}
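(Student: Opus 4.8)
The plan is to exhibit, for every chain graph $G=(A,B,E)$, an explicit $2$-letter representation over the alphabet $\Omega=\{a,b\}$ with decoder $\mathcal{P}=\{(a,b)\}$. This is exactly the word representation recalled at the start of Section~\ref{sec:struct}, so the statement will follow once I show that every chain graph arises from some word $w$ under this encoding. Since a $1$-letter alphabet can only produce complete or edgeless graphs (whereas, e.g., $K_{1,2}$ is a chain graph that is neither), the bound of $2$ is also best possible for the class, though only the upper bound is needed here.

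First I would fix a perfect ordering of $G$, say $a_1,\dots,a_\ell$ with $N(a_1)\supseteq\cdots\supseteq N(a_\ell)$ and $b_1,\dots,b_k$ with $N(b_1)\subseteq\cdots\subseteq N(b_k)$. Because the neighbourhoods are nested, each $b_j$ is adjacent precisely to a prefix $a_1,\dots,a_{d_j}$ of the $a$-vertices, where $d_j=|N(b_j)|$, and $d_1\le d_2\le\cdots\le d_k$. Thus $a_i\sim b_j$ if and only if $i\le d_j$; this ``staircase'' property is the combinatorial heart of the construction.

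Next I would build the word $w$ by laying the $a$-vertices down in the order $a_1,\dots,a_\ell$ and inserting the letter $b_j$ immediately after $a_{d_j}$ (and at the very front when $d_j=0$); ties among the $b_j$ are broken by index, which is consistent since the $d_j$ are non-decreasing. Viewing $w$ as an element of $\{a,b\}^*$ and assigning letter $a$ to each vertex of $A$ and letter $b$ to each vertex of $B$, the decoder $\mathcal{P}=\{(a,b)\}$ makes each $a$ adjacent to precisely the $b$'s that follow it. I would then verify that in $w$ the letter of $a_i$ precedes that of $b_j$ exactly when $i\le d_j$: if $d_j\ge i$ then $b_j$ sits after $a_{d_j}$ and hence after $a_i$, while if $d_j<i$ then $b_j$ sits before $a_{d_j+1}$ and hence before $a_i$. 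Combined with the staircase property, this gives $G(\mathcal{P},w)\cong G$, and since $|\Omega|=2$ we conclude $\lett(G)\le 2$.

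The construction is short, and the only place requiring care is the bookkeeping of the interleaving: showing that the single insertion rule ``place $b_j$ after $a_{d_j}$'' simultaneously produces all the correct adjacencies and all the correct non-adjacencies, which is exactly where the nestedness $d_1\le\cdots\le d_k$ is essential. I do not expect a genuine obstacle here. Indeed, the whole argument amounts to verifying that the order-preserving bijection between words over $\{a,b\}$ and coloured chain graphs asserted in Section~\ref{sec:struct} is surjective, so the cleanest write-up is simply to invoke that correspondence and read off the $2$-letter representation.
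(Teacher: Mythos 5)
Your proposal is correct and takes essentially the same route as the paper: the paper asserts this fact via the word representation over $\{a,b\}$ with decoder $\{(a,b)\}$ described at the start of Section~\ref{sec:struct} (citing \cite{lettericity}), and your explicit ``staircase'' construction --- inserting each $b_j$ immediately after $a_{d_j}$ in the perfect ordering --- is precisely the verification that this correspondence hits every chain graph. The interleaving check is sound, including the $d_j=0$ case and the irrelevance of tie-breaking among $b$'s, so nothing is missing.
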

\begin{fact} \label{fact4}
	The classes $\mathcal M$ and $\widetilde{\mathcal M}$ are minimal hereditary classes of unbounded lettericity.
\end{fact}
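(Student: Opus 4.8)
For each of the two classes there are two halves to establish: \emph{unboundedness} (the class contains graphs of arbitrarily large lettericity) and \emph{minimality} (every proper hereditary subclass has bounded lettericity). The plan is to prove both halves for $\mathcal M$ directly, and then transfer them to $\widetilde{\mathcal M}$ through a bipartite-complementation argument.

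For unboundedness of $\mathcal M$, I would show that $\lett(nK_2)\to\infty$. The key structural observation is that in \emph{any} letter graph $G(\mathcal P, w)$, the bipartite graph induced between two distinct letter classes $V_i$ and $V_j$ is a chain graph: a short case analysis on which of $(i,j)$ and $(j,i)$ lie in $\mathcal P$ shows that an induced cross-$2K_2$ would force a contradictory chain of word-order inequalities. Granting this, suppose $nK_2$ were a $k$-letter graph. By pigeonhole some class $V_\ell$ has at least $2n/k$ vertices; once this is at least $3$ we must have $(\ell,\ell)\notin\mathcal P$ (otherwise three vertices of $V_\ell$ would form a triangle, impossible in a matching), so $V_\ell$ is independent and each of its vertices is matched to its unique neighbour outside $V_\ell$. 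These neighbours lie in fewer than $k$ other classes, so by a second pigeonhole some class receives at least $2n/k^2$ of them, yielding an induced matching of size at least $2n/k^2$ between two classes. For $n\ge k^2$ this matching has size at least $2$, contradicting the chain structure above. Hence $\lett(nK_2)>k$ whenever $n\ge k^2$, and lettericity is unbounded on $\mathcal M$.

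For minimality of $\mathcal M$, observe that every member is of the form $pK_2\cup qK_1$, and that $pK_2\cup qK_1$ contains $jK_2\cup kK_1$ as an induced subgraph exactly when $p\ge j$ and $p+q\ge j+k$. A proper hereditary subclass omits some $H=jK_2\cup kK_1$, so each of its members has either at most $j-1$ edges or at most $j+k-1$ vertices. The graphs of the second kind form a finite set and so have bounded lettericity; a graph of the first kind is a fixed graph on at most $2(j-1)$ vertices together with isolated vertices, which can be encoded by reusing the bounded representation of the non-isolated part and appending one further letter $z$ that occurs in no decoder pair. Both families thus have uniformly bounded lettericity.

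Finally, to handle $\widetilde{\mathcal M}$ I would prove that for every bipartite graph $G$ one has $\lett(\widetilde G)\le 2\lett(G)$. Starting from a $k$-letter representation of $G$, split each letter into an $A$-version and a $B$-version (at most doubling the alphabet); since the parts are independent, every surviving decoder pair is a cross pair between an $A$-letter and a $B$-letter, and flipping exactly these pairs while keeping the same word yields $\widetilde G$. Applying this to $G$ and to $\widetilde G$ shows the two lettericities agree up to a factor of $2$, so $\lett(\widetilde{nK_2})\ge\tfrac12\lett(nK_2)\to\infty$ gives unboundedness; moreover the involution $G\leftrightarrow\widetilde G$ is an isomorphism of the induced-subgraph orders, so it carries proper hereditary subclasses and their bounded-lettericity property from $\mathcal M$ to $\widetilde{\mathcal M}$. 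I expect the main obstacle to be the unboundedness half for $\mathcal M$, namely pinning down the chain-graph structure between letter classes and arranging the double pigeonhole so that the forbidden $2K_2$ is genuinely induced; once that lower bound is secured, minimality and the transfer to $\widetilde{\mathcal M}$ are comparatively routine.
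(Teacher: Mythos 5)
Your proof is correct, but there is nothing in the paper to match it against: the paper states Fact~\ref{fact4} without proof, citing \cite{lettericity} for everything except minimality, which it merely asserts ``can be easily shown directly''. Your self-contained reconstruction is sound. The engine of your unboundedness half --- that between any two letter classes of a letter graph the bipartite graph is complete, empty, or a half-graph, hence $2K_2$-free --- is exactly the structural fact underlying Petkov\v{s}ek's lower bound for matchings, and your double pigeonhole correctly yields $\lett(nK_2) > k$ whenever $n \geq k^2$ (the two matched cross edges between the two classes are genuinely induced, since $nK_2$ has no edges besides the matching). The minimality half is also right: the containment criterion $pK_2 \cup qK_1 \supseteq jK_2 \cup kK_1$ if and only if $p \geq j$ and $p + q \geq j + k$ is correct, though note that $p + q \leq j + k - 1$ bounds the number of \emph{components}, so the second family has at most $2(j + k - 1)$ vertices rather than $j + k - 1$ --- harmless. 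Your factor-two lemma $\lett(\widetilde{G}) \leq 2\lett(G)$ is valid (same-part decoder pairs can safely be discarded because the parts are independent) and is a nice general tool the paper never states; the paper invokes invariance under bipartite complementation only for its enhanced letter representations, in a different context. One piece of bookkeeping deserves repair in your transfer step: bipartite complementation is well defined only for graphs with a \emph{fixed} bipartition (an isolated vertex may sit in either part, and the two choices give non-isomorphic complements, e.g.\ $2K_1$ maps to either $2K_1$ or $K_2$), so ``the involution is an isomorphism of the induced-subgraph orders'' is not literally true on abstract graphs. The fix is easy: every member of $\widetilde{\mathcal M}$ embeds in $\widetilde{NK_2}$ for some $N$ (since $pK_2 \cup qK_1$ embeds in $NK_2$ with any prescribed sides for the isolated vertices), so a proper hereditary subclass of $\widetilde{\mathcal M}$ omits some $\widetilde{NK_2}$; pulling back through complementation of bipartitioned graphs --- which does preserve induced embeddings --- gives a hereditary subclass of $\mathcal M$ omitting $NK_2$, your minimality argument bounds its lettericity, and your factor-two lemma then bounds the original class. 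With that adjustment your proof is complete.
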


\medskip

We claim that, in addition to the classes $\mathcal M$ and $\widetilde{\mathcal M}$, there is only one more minimal class of unbounded lettericity among quasi-chain graphs, 
defined as follows. As before, let $Z_n$ be the prime chain graph on $2n$ vertices illustrated in Figure~\ref{fig-chain}. 
We construct {\em double-chain} graphs $D_n$ as follows: start with $Z_{3n}$, then like in the construction of $Q_{3n}$, 
delete all edges of the form $(a_i, b_{i + 1})$. Finally, delete all vertices whose index is divisible by 3. 
$D_n$ can be thought of as $Z_n$, where we replace each vertical edge with a $2P_2$ -- see Figure~\ref{fig-doublechain} for an illustration.

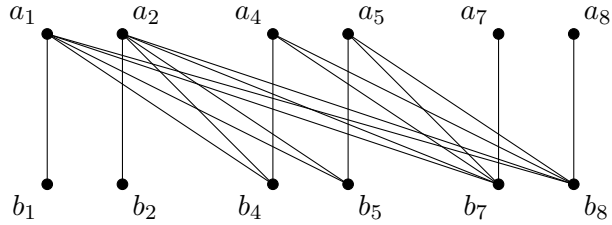
\begin{figure}
	\centering
	\begin{tikzpicture}[scale=1, transform shape]
		
		\foreach \i in {1, 4, 7} {
			\filldraw (\i, 2) circle (2pt) node[above left]{$a_{\i}$};
			\filldraw (\i, 0) circle (2pt) node[below left]{$b_{\i}$};	
		}
		
		\foreach \i in {2, 5, 8} {
			\filldraw (\i, 2) circle (2pt) node[above right]{$a_{\i}$};
			\filldraw (\i, 0) circle (2pt) node[below right]{$b_{\i}$};	
		}
		
		\foreach \i in {1, 2, 4, 5, 7, 8} {
			\draw (\i, 2) -- (\i, 0);	
		}
		
		\foreach \i in {1, 2} {
			\foreach \x in {4, 5, 7, 8} {
				\draw (\i, 2) -- (\x, 0);	
			}	
		}
		
		\foreach \i in {4, 5} {
			\foreach \x in {7, 8} {
				\draw (\i, 2) -- (\x, 0);	
			}	
		}

	\end{tikzpicture}
	\caption{The double-chain graph $D_3$}
	\label{fig-doublechain} 
\end{figure}

Let $\mathcal D$ be the class containing, for each value of $n$, the graph $D_n$ and all of their induced subgraphs. 
We note that the chain ordering inherited from the starting graph $Z_{3n}$ is good in $D_n$, so that $\mathcal D$ is indeed a subclass of quasi-chain graphs.

\begin{lemma}
	$\mathcal D$ is a minimal hereditary class of unbounded lettericity.
\end{lemma}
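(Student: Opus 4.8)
The lemma splits into two halves: (I) the graphs $D_n$ have lettericity tending to infinity, and (II) minimality, i.e.\ every proper hereditary subclass of $\mathcal D$ has bounded lettericity. I will outline both, flagging the first as the genuinely delicate part.

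For (I), the plan is to fix a putative representation of $D_n$ as a $k$-letter graph, with word order $\prec$ and decoder $\mathcal P$, and to prove $n \le k^4$. I would relabel $V(D_n)$ by blocks, block $l$ being $a_{l,1},a_{l,2},b_{l,1},b_{l,2}$, so that within a block one has the $2K_2$ (edges $a_{l,s}b_{l,s}$, non-edges $a_{l,s}b_{l,t}$ for $s\ne t$), while for $l<l'$ all of $a_l$ is complete to $b_{l'}$ and all of $a_{l'}$ is anticomplete to $b_l$. To each block assign its \emph{profile} $\bigl(w(a_{l,1}),w(a_{l,2}),w(b_{l,1}),w(b_{l,2})\bigr)\in\Omega^4$. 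The crux is to show that no two blocks can share a profile; since there are at most $k^4$ profiles, this forces $n\le k^4$ and hence $\lett(D_n)\ge n^{1/4}\to\infty$.

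To rule out two equal profiles, suppose blocks $l<l'$ both carry $(\alpha,\alpha',\beta,\beta')$. Each of the four ordered letter pairs between $\{\alpha,\alpha'\}$ and $\{\beta,\beta'\}$ is realized \emph{both} as an edge and as a non-edge (for instance $\{\alpha,\beta\}$ is an edge on $(a_{l,1},b_{l,1})$ but a non-edge on $(a_{l',1},b_{l,1})$), so each pair must be \emph{order-dependent}: adjacency is decided by which endpoint comes first in $\prec$. After reversing the word if necessary, the pattern $a_{l,1}\sim b_{l',1}\iff l\le l'$ forces the interleaving $a_{1,1}\prec b_{1,1}\prec a_{2,1}\prec\cdots$; the strict patterns $a_{l,1}\sim b_{l',2}\iff l<l'$ and $a_{l,2}\sim b_{l',1}\iff l<l'$ then pin down $b_{l,2}$ strictly between $a_{l-1,1}$ and $a_{l,1}$, and $a_{l,2}$ strictly between $b_{l,1}$ and $b_{l+1,1}$; the within-block edge $a_{l,2}b_{l,2}$ forces the remaining orientation; and finally the \emph{required} non-edge between $a_{l,2}$ and $b_{l-1,2}$ is contradicted by the positions just derived. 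I expect this bookkeeping of forced orientations, together with the reflection ``without loss of generality'', to be the main obstacle of the whole proof.

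For (II), first note that $D_n\subseteq D_{n+1}$ (take the first $n$ blocks) and that, by definition of $\mathcal D$, every graph of $\mathcal D$ is an induced subgraph of some $D_N$. Hence if $\mathcal C\subsetneq\mathcal D$ is a proper hereditary subclass it omits some $G\subseteq D_N$, and is therefore $D_N$-free (any graph containing $D_N$ would contain $G$). So it suffices to bound the lettericity of $D_N$-free graphs in $\mathcal D$. The key structural observation is that \emph{every induced $2P_2$ inside any $D_M$ uses all four vertices of a single block}: chasing the defining condition $a_i\sim b_j\iff i\le j$ and $j\ne i+1$ shows that the two edges and two non-edges of an induced $2P_2$ collapse all four endpoints into one block. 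Consequently a $D_N$-free $H\subseteq D_M$ has at most $N-1$ \emph{full} blocks (any $N$ full blocks induce $D_N$); deleting the at most $4(N-1)$ vertices of those blocks leaves a graph with no full block, which is then $2P_2$-free, i.e.\ a chain graph of lettericity at most $2$ by Fact~\ref{fact3}. Re-adding the deleted vertices one at a time and applying Fact~\ref{fact2} gives $\lett(H)\le 3\cdot 2^{4(N-1)}$, a bound depending only on $N$. Thus $\mathcal C$ has bounded lettericity, which together with (I) establishes that $\mathcal D$ is a minimal hereditary class of unbounded lettericity.
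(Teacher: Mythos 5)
Your proposal is correct, and its unboundedness half takes a genuinely different route from the paper's. The paper pigeonholes on the subwords of a putative $k$-letter word representing the $n$ blocks (the induced $2P_2$s): it extracts many blocks represented by identical subwords, thereby reduces to a hypothetical $4$-letter representation of $D_t$ whose letter classes are exactly the four natural vertex classes, pins down the decoder up to symmetry, and observes that even a single $2P_2$ is then inexpressible. You instead count profiles directly and show no two blocks can carry the same $4$-tuple of letters, giving $n \leq k^4$; this buys an explicit quantitative bound $\lett(D_n) \geq n^{1/4}$, which the paper's pigeonhole argument does not provide, at the cost of the orientation bookkeeping you flag. That bookkeeping does close, and only ever needs the two blocks $l < l'$: each of the four cross letter pairs is realized both as an edge and a non-edge (this also disposes of accidental coincidences among the four letters, since a pair whose adjacency is order-independent cannot realize both values), so each is order-dependent; fixing the orientation of $(\alpha, \beta)$ by word reversal forces $a_{l,1} \prec b_{l,1} \prec a_{l',1} \prec b_{l',1}$; the pair $(\alpha, \beta')$ is then forced (its other orientation would put $a_{l',1}$ before $a_{l,1}$) to yield $b_{l,2} \prec a_{l,1} \prec b_{l',2} \prec a_{l',1}$; and every remaining orientation choice for $(\alpha', \beta')$ and $(\alpha', \beta)$ contradicts these two chains, using in particular the non-edge between $a_{l',2}$ and $b_{l,2}$. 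One slip to fix in the write-up: your derivation drifts into indices such as $b_{l-1,2}$ and the global interleaving $a_{1,1} \prec b_{1,1} \prec a_{2,1} \prec \cdots$, which presuppose that \emph{all} blocks share one profile -- more than the profile-counting claim grants you; the statements should be phrased purely about the pair $l, l'$, as above. Your minimality half is essentially the paper's argument, with the useful extra observation, left implicit in the paper, that every induced $2P_2$ in $D_M$ is exactly a full block (no $2P_2$ can use a cross edge, as the chainlike adjacencies between blocks force an extra edge), which cleanly justifies deleting at most $4(N-1)$ vertices to reach a chain graph before applying Facts~\ref{fact3} and~\ref{fact2}.
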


\begin{proof}
	We first show that any proper subclass of $\mathcal D$ has bounded lettericity. 
Indeed, such a subclass is $D_n$-free for an appropriately large $n$, and any $D_n$-free graph $G$ contains at most $n$  copies of induced $2P_2$s. 
This means we may remove at most $4n$ vertices from $G$ to obtain a chain graph. Fact~\ref{fact3} and repeated application of Fact~\ref{fact2} gives a bound on $\lett(G)$ that only depends on $n$.
	
	It remains to show that lettericity is unbounded in $\mathcal D$. To see this, suppose for a contradiction that the lettericity is bounded by $k$. 
The graph $D_n$ consists of $n$ copies of induced $2P_2$s connected in a chainlike manner. 
Given a $k$-letter word $w$ representing $D_n$, we consider the subwords of $w$ representing each of the $2P_2$s. 
In particular, by the pigeonhole principle, for any $t \in \mathbb N$, we may find an $N$ large enough such that $t$ of the $2P_2$s in $D_N$ are represented by the same subword. 
Those $t$ copies of $2P_2$s induce a copy of $D_t$ in $D_N$ whose letter graph representation only uses 4 letters; 
in particular, since any $D_t$ has such a representation, we may assume $k \leq 4$. 
A similar argument shows that for each $D_n$ there must exist a representation with letters $a, b, c, d$, 
where the four respective letter classes are (using the indexing from Figure~\ref{fig-doublechain}) 
$A := \{a_i : i = 1 \!\! \mod 3\}$, $B := \{b_i : i = 1 \!\! \mod 3\}$, $C := \{a_i : i = 2 \!\! \mod 3\}$ and $D := \{b_i : i = 2 \!\! \mod 3\}$.
 Standard arguments show that, up to symmetry, the decoder for this representation must be $\{(a, b), (a, d), (c, b), (c, d)\}$. 
But even a single $2P_2$ cannot be expressed in this way -- a contradiction.
\end{proof}

We are ready for the main result of this section, which characterises classes of bounded lettericity among quasi-chain graphs.
In the proof, given two vertex-disjoint bipartite graphs $G_1 = (A_1, B_1, E_1)$ and $G_2 = (A_2, B_2, E_2)$, we define the {\em skew-join} of $G_1$ with $G_2$ 
as the graph $(A_1 \cup A_2, B_1 \cup B_2, E_1 \cup E_2 \cup A_1 \times B_2)$.

\begin{theorem}\label{thm:boundedlet}
	Let $\mathcal X$ be a hereditary subclass of quasi-chain graphs. Then $\mathcal X$ has bounded lettericity if and only if 
$\mathcal X$ excludes at least one graph from each of $\mathcal M, \widetilde{\mathcal M}$ and $\mathcal D$.
\end{theorem}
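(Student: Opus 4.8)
\emph{The forward direction} is immediate from minimality. The plan is to argue contrapositively: if $\mathcal X$ failed to exclude any graph of $\mathcal M$, then, since $\mathcal M$ is hereditary and every one of its members would then lie in $\mathcal X$, we would have $\mathcal M \subseteq \mathcal X$; as $\mathcal M$ has unbounded lettericity by Fact~\ref{fact4}, so would $\mathcal X$. The identical argument applies to $\widetilde{\mathcal M}$ (Fact~\ref{fact4}) and to $\mathcal D$ (via the lemma asserting that $\mathcal D$ is a minimal class of unbounded lettericity). Hence bounded lettericity forces $\mathcal X$ to exclude a graph from each of the three classes.

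\emph{For the backward direction} I would first normalise the hypothesis. Every member of $\mathcal M$, $\widetilde{\mathcal M}$ and $\mathcal D$ embeds as an induced subgraph into $sK_2$, into $\widetilde{sK_2}$ (the bipartite complement of a perfect matching), and into $D_s$, respectively, for $s$ large; since $\mathcal X$ is hereditary, excluding one graph from each class is equivalent to assuming that $\mathcal X$ is simultaneously $sK_2$-free, $\widetilde{sK_2}$-free and $D_s$-free for a single constant $s$. The core reduction is then the following claim: there is a constant $c = c(s)$ such that every $G \in \mathcal X$ admits a set $S$ of at most $c$ vertices with $G - S$ a chain graph. Granting this, $G - S$ has lettericity at most $2$ by Fact~\ref{fact3}, and reinserting the at most $c$ deleted vertices one at a time, each application of Fact~\ref{fact2} at most doubles the lettericity and adds one, so $\lett(G) \le 3\cdot 2^{c}$, a constant depending only on $s$.

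\emph{To build such a small $S$}, I would exploit that the obstructions to being a chain graph are precisely the induced copies of $2P_2$, so it suffices to bound the minimum number of vertices hitting all induced $2P_2$s of $G$. If this number exceeds $4k$, a greedy packing produces $k$ pairwise vertex-disjoint induced $2P_2$s (each time the greedy procedure gets stuck, the four vertices of the last block, together with the previously chosen ones, already form a hitting set). I would then apply an ordered Ramsey argument to such a packing: order the $k$ blocks by position and colour each pair by the isomorphism type, with bipartition roles recorded, of the $8$-vertex subgraph it induces. For $k$ exceeding the relevant Ramsey number, this yields a homogeneous sub-packing of size $t$, and the key claim is that in a quasi-chain graph every surviving homogeneous type forces this sub-packing to induce a member of one of the three classes: no cross edges give an induced matching $(2t)K_2 \in \mathcal M$; all cross edges present give $\widetilde{(2t)K_2}\in\widetilde{\mathcal M}$ (the within-block non-edges assemble into a perfect matching of non-edges); and the chain-ordered pattern (all forward cross edges present, all backward ones absent) gives exactly $D_t \in \mathcal D$. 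Choosing $t \ge s$ then contradicts the excluded subgraphs, so the hitting number is at most $4k = c(s)$, as required.

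\emph{The main obstacle} I expect is precisely this last classification step: showing that the $2P_3$-free (quasi-chain) condition collapses all possible homogeneous interaction types of two induced $2P_2$s down to the three listed patterns, ruling out every ``partial'' cross pattern. Here I would place the two blocks inside an enhanced letter representation (Theorem~\ref{thm:enhanced-let}) and use that the added and removed edges each form a matching, so that any intermediate cross pattern exposes three vertices in one part together with appropriate neighbours forming a forbidden unbalanced $2P_3$. The bipartite-complement symmetry, which swaps $\mathcal M$ with $\widetilde{\mathcal M}$, fixes $\mathcal D$, and preserves both quasi-chainness and the existence of enhanced representations, should let me pair the ``empty'' and ``full'' cases and thereby halve the analysis.
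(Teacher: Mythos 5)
Your forward direction is fine, and the outer skeleton of your backward direction (packing--covering duality for induced $2P_2$s, ordered Ramsey on a packing of blocks, then bounded vertex deletion down to a chain graph followed by Fact~\ref{fact3} and repeated Fact~\ref{fact2}) is a genuinely different route from the paper, which instead inducts on $n$ for the classes of $(sP_2, \widetilde{tP_2}, D_n)$-free quasi-chain graphs, uses the decomposition $G = Z \otimes H$ of Theorem~\ref{thm:decomposition} to define cuts along the chain order of $Z$, and handles three cases (perfect cut: clean and get a chain graph; good cut: clean and get a skew-join of two $D_n$-free graphs, apply induction; all cuts bad: clean two consecutive cuts). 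But your route has a genuine gap exactly where you predicted the main obstacle: the classification of homogeneous interaction types. Your specific claim that quasi-chainness ``rules out every partial cross pattern'' is false. Take blocks $\{x_i, y_i\} \cup \{u_i, v_i\}$ with block edges $x_iu_i$, $y_iv_i$, and for $i < j$ exactly the cross edges $x_iu_j$ and $x_iv_j$ (the $x$-vertices complete forward, the $y$-vertices with no cross edges at all). One checks directly that this graph contains no unbalanced $2P_3$ for any number $t$ of blocks: the neighbourhoods of the $x_i$ are nested, every $y_i$ has degree $1$, the neighbourhoods of the $u_i$ are nested, and for any pair among $\{u_i, v_j\}$ with incomparable neighbourhoods one of the two vertices has only a single private neighbour, so no vertex pair can serve as the two centres. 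This partial pattern therefore survives for arbitrarily many blocks, and your proposed mechanism (three vertices in one part exposing a forbidden $2P_3$) cannot eliminate it.

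What you actually need is the weaker but still substantial lemma: \emph{every} homogeneous type that survives quasi-chainness forces the $t$-block sub-packing to \emph{contain} a member of $\mathcal M$, $\widetilde{\mathcal M}$ or $\mathcal D$ of size growing with $t$ --- in the counterexample above, $\{y_iv_i : i \leq t\}$ is an induced matching of size $t$, so the lemma survives even though your three-pattern trichotomy does not. Proving this requires an honest case analysis over the $2^8$ cross types (reducible by the bipartite-complementation and order-reversal symmetries you correctly identify, but still many cases), which the proposal does not carry out; the enhanced-letter-representation idea does not obviously discharge it, since Theorem~\ref{thm:enhanced-let} constrains the global matchings but says nothing directly about which 8-vertex patterns recur homogeneously. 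Note also that your route, if completed, would establish a strictly stronger structural fact than the paper proves: that every $(sP_2, \widetilde{tP_2}, D_n)$-free quasi-chain graph is within a bounded number of vertex deletions of a chain graph. The paper deliberately avoids this --- in its case (ii) the cleaned graph is a skew-join of two graphs that may each be far from chain, and is handled by induction rather than deletion --- so nothing in the paper can be cited to close your gap; the full burden of the classification lemma rests on you.
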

\begin{proof}
	The ``only if'' direction is clear, since $\mathcal M, \widetilde{\mathcal M}$ and $\mathcal D$ all have unbounded lettericity. 
For the ``if'' direction, let $\mathcal X$ be a hereditary subclass of quasi-chain graphs excluding a graph from each of the three classes. 
It suffices to show that the classes $\mathcal X_{s, t, n}$ of $(sP_2, \widetilde{tP_2}, D_n)$-free quasi-chain graphs have bounded lettericity for all $s, t, n \in \mathbb N$, since $\mathcal X$ is contained in such a class.
	
We prove the statement by induction on $n$. The statement is clearly true if $n = 1$ for all $s, t$, since $\mathcal X_{s, t, 1}$ is a subclass of chain graphs, which have lettericity 2.
	
\medskip
Now suppose $n \geq 1$, and let $G = (A, B, E) \in  X_{s, t, n + 1}$. By Theorem~\ref{thm:decomposition}, $G = Z \otimes H$, where $Z$ is a chain graph, and $E(H) \cap E(Z)$, $E(H) - E(Z)$ are both matchings. 
	
Let $a_1, \dots, a_k$ be the vertices of $A$ listed in non-increasing order with respect to their neighbourhoods in $Z$. 
Each vertex $a_i$ gives a partition of $A$ into a ``left'' part $A^l_i = \{a_1, \dots, a_i\}$ and 
a ``right'' part $A^r_i = \{a_{i + 1}, \dots, a_k\}$, and a partition of $B$ into $B^l_i = B - N(a_i)$ and $B^r_i = N(a_i)$. 
This produces a cut of $Z$ into two smaller chain graphs $Z^l_i := Z[A^l_i \cup B^l_i]$ and $Z^r_i := Z[A^r_i \cup B^r_i]$, 
and it is not difficult to see $Z$ is the skew-join of $Z^l_i$ with $Z^r_i$, since $A^l_i$ is complete to $B^r_i$, 
while $A^r_i$ is anticomplete to $B^l_i$. Similarly, we obtain a cut of $G$ into quasi-chain graphs $G^l_i$ and $G^r_i$. 
We will refer to those cuts as the cuts {\em induced by $a_i$}. 
	
\medskip
These cuts are very neat in the chain graph $Z$, but how do they look in the original quasi-chain graph $G$? 
Specifically, where do induced $2P_2$s in $G$ appear with respect to these cuts? 
The first thing to note is that, for any given cut, the edges between $A^r_i$ and $B^l_i$ in $G$ belong to $E(H) - E(Z)$, 
and thus induce a matching. Since $G$ is $sP_2$-free, there are at most $s - 1$ of them. 
Similarly, there are at most $t - 1$ non-edges in $G$ between $A^l_i$ and $B^r_i$. 
We call the (at most $2s + 2t - 4$) vertices incident to those edges or non-edges {\em $i$-dirty}. 
We call an induced $2P_2$ in $G$ {\em $i$-bad} if it does not contain any $i$-dirty vertex 
(the reasoning being that the bad $2P_2$s do not simply disappear when removing dirty vertices). 
We now claim that any $i$-bad $2P_2$ lies completely in $G^l_i$ or in $G^r_i$ (we call it {\em left $i$-bad} 
or {\em right $i$-bad} accordingly). To see that this is indeed the case, we simply note that any $2P_2$ 
with vertices in both $G^l_i$ and $G^r_i$ needs to have either a crossing edge between $A^r$ and $B^l$, 
or a crossing non-edge between $A^l$ and $B^r$. Finally, we call the cut induced by $a_i$ {\em perfect} 
if there are no $i$-bad $2P_2$s, {\em good} if there is both a left $i$-bad $2P_2$ and a right $i$-bad $2P_2$, 
and {\em bad} if it neither good nor perfect. There are three possible cases:
	
\begin{itemize}
	\item[i)] {\em There is an $i$ such that the cut induced by $a_i$ is perfect.} 
In this case, we note that ``cleaning the cut'' by removing all $i$-dirty vertices 
from $G$ yields a chain graph $G'$. But we have removed a bounded number of vertices, 
hence Fact~\ref{fact3} and repeated application of Fact~\ref{fact2} give an upper bound on the lettericity of $G$ that only depends on $s$ and $t$.  
		
\item[ii)] {\em There is an $i$ such that the cut induced by $a_i$ is good.} 
Then like before, cleaning the cut yields a quasi-chain graph $G'$ which is 
a skew-join of the graphs $G'^l := G' \cap G^l$ and $G'^r := G' \cap G^r$. 
By construction, $G'^l$ and $G'^r$ each have a $2P_2$; since $G$ (and hence $G'$) is $D_{n + 1}$-free, 
it follows that $G'^l$ and $G'^r$ are both $D_n$-free, and the inductive hypothesis applies. 
From the representations of $G'^l$ and $G'^r$ with a bounded number of letters, 
it is easy to construct one for their skew-join $G'$, then use that representation to construct one for $G$ like in the previous case.
		
\item[iii)] {\em Every cut is bad.} This means that each $a_i$ has either a left or a right $i$-bad $2P_2$ (but not both). 
We note that $a_1$ must have a right $1$-bad $2P_2$, while $a_k$ must have a left $k$-bad $2P_2$. 
Moreover, if a $2P_2$ is left, respectively right $i$-bad, then it is left $j$-bad for any $j \geq i$, 
respectively right $j$-bad for any $j \leq i$. This implies that there is one specific $i_0$ such that 
$a_1, \dots, a_{i_0}$ all have right bad $2P_2$s, while $a_{i_0 + 1}, \dots, a_k$ all have left bad $2P_2$s. 
We claim that no $2P_2$ can be simultaneously $i_0$- and $i_0 + 1$-bad. Indeed, both vertices 
$a_{i_1}, a_{i_2} \in A$ of such a $2P_2$ would simultaneously need $i_1, i_2 > i_0$ and $i_1, i_2 \leq i_0 + 1$, which is impossible. 
It follows that cleaning both of the cuts induced by $a_{i_0}$ and $a_{i_0 + 1}$ leaves us with a chain graph, and we proceed as in the first case.
\end{itemize}
	
\end{proof}

Theorem~\ref{thm:boundedlet} gives us a characterisation of subclasses of quasi-chain graphs of bounded lettericity. 
All of those subclasses are wqo, but a wqo class need not have bounded lettericity -- for instance, 
the minimal classes $\mathcal M, \widetilde{\mathcal M}$ and $\mathcal D$ themselves are wqo. 
For $\mathcal M$ and $\widetilde{\mathcal M}$, this is a special case of Theorem~2 from \cite{bigenic}. Let us now show the claim for $\mathcal D$.

\begin{theorem} \label{lem:doublechainwqo}
	$\mathcal D$ is wqo by induced subgraphs.
\end{theorem}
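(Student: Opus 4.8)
The plan is to realise $\mathcal D$ as a class of words over a fixed finite alphabet, ordered by the Higman subsequence-domination order, and then invoke Higman's lemma. The starting observation is that $D_n$ has a natural block structure: its vertex set splits into $n$ consecutive \emph{columns}, the $j$-th column consisting of the four vertices $\{a_{3j-2}, a_{3j-1}, b_{3j-2}, b_{3j-1}\}$, which induce a $2P_2$ with edges $a_{3j-2}b_{3j-2}$ and $a_{3j-1}b_{3j-1}$. Moreover, the cross-column adjacencies are completely uniform: for $j < k$, every $a$-vertex of column $j$ is adjacent to every $b$-vertex of column $k$, while every $b$-vertex of column $j$ is non-adjacent to every $a$-vertex of column $k$. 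Thus $D_n$ is a ``chain of $2P_2$ blocks'' in which the interaction between any two columns depends only on their relative order and on the $a$/$b$ roles of the vertices involved.

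Every graph in $\mathcal D$ is an induced subgraph of some $D_n$, and such an induced subgraph is determined (up to isomorphism respecting the bipartition) by the sequence of its nonempty columns, since all adjacencies are dictated by column membership and roles. First I would fix the finite alphabet $\Sigma$ of \emph{block types}, namely the nonempty subsets of the labelled vertex set $\{a_L, a_R, b_L, b_R\}$ of a single $2P_2$ (so $|\Sigma| \le 15$), partially ordered by inclusion $\subseteq$; this finite poset is trivially a wqo. To each $G \in \mathcal D$, realised as an induced subgraph of some $D_n$, I associate the word $w(G) \in \Sigma^*$ obtained by listing, in order of increasing column index, the type of each nonempty column (the subset of retained vertices, recorded with their roles), discarding the empty columns. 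This map sends $\mathcal D$ onto $\Sigma^*$.

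The key step is the monotonicity claim: if $w(G)$ is dominated by $w(G')$ in the Higman order over $(\Sigma, \subseteq)$ -- that is, there is an order-preserving injection $\phi$ from the letters of $w(G)$ to those of $w(G')$ with each letter of $w(G)$ a subset of its image -- then $G$ is an induced subgraph of $G'$. The embedding is read off directly from $\phi$: within each matched column the inclusion of block types gives a role-preserving vertex injection into the corresponding $2P_2$ of $G'$, so the within-column adjacencies are preserved; and for two vertices in distinct columns, both the original and the image adjacencies are governed by the same uniform rule (adjacent if and only if the earlier vertex is an $a$-vertex and the later one a $b$-vertex), which is respected because $\phi$ preserves the column order and the inclusion maps preserve the $a$/$b$ roles. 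Hence $G$ embeds into $G'$ as an induced subgraph. Since $(\Sigma, \subseteq)$ is a finite poset, Higman's lemma guarantees that $\Sigma^*$ under the domination order is wqo; therefore any infinite sequence in $\mathcal D$ yields two indices $i < j$ with $w(G_i) \le w(G_j)$, hence two comparable graphs $G_i \le G_j$, ruling out infinite antichains. As finite graphs admit no infinite strictly descending chains, this establishes that $\mathcal D$ is wqo.

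The only delicate point -- and the step I would write out most carefully -- is the monotonicity claim, specifically the verification that the cross-column adjacencies are genuinely uniform and that the block-type inclusion preserves the $a$/$b$ roles on which those adjacencies depend; once this is pinned down, the reduction to Higman's lemma is immediate. Everything else (the column decomposition of $D_n$ and the surjectivity of the encoding) is routine bookkeeping.
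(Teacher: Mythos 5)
Your proposal is correct and takes essentially the same route as the paper: both encode each graph in $\mathcal D$ as a word over a finite alphabet of column (block) types arising from the skew-join chain structure of $D_n$ and conclude via Higman's lemma. The only cosmetic difference is that you order the $15$ labelled block types by inclusion and use the domination order, constructing the induced-subgraph embedding directly, whereas the paper uses $9$ pairwise incomparable isomorphism types with the plain subword order and appeals to the standard fact that an order-preserving surjection from a wqo poset has wqo image.
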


\begin{proof}
	It suffices to produce an order-preserving surjection from a wqo poset $(X, \leq)$ to $\mathcal D$ ordered by the induced subgraph relation (this fact is standard -- see, e.g., \cite{monotone}, Proposition~3.1).
	
Our poset $X$ will be the set of words over a finite alphabet of incomparable letters, ordered under the subword relation -- wqo of this poset is a special case of Higman's Lemma. 
Note that a coloured $2P_2$ has, up to isomorphism, 9 distinct non-empty induced subgraphs. 
Consider an alphabet $\Omega$ consisting of incomparable letters $A_1, \dots, A_{9}$, 
where each letter corresponds (arbitrarily) to one of those induced subgraphs.
We define a map $\varphi$ from the set $\Omega^*$ of words over $\Omega$ to graphs inductively, 
by defining $\varphi(A_i)$ to be the corresponding induced subgraph of $2P_2$, 
and $\varphi(A_iw')$ to be the skew-join of $\varphi(A_i)$ with $\varphi(w')$ 
(where $A_iw'$ denotes the concatenation of $A_i$ with the word $w'$). 
	
We note that the image of any word of length $n$ is an induced subgraph of $D_n$ (see Figure~\ref{fig-doublechain}), 
hence $\varphi(\Omega^*) \subseteq \mathcal D$. Since any induced subgraph of $D_n$ can be obtained in this way, $\varphi$ is surjective. 
Finally, it is straightforward to check that $\varphi$ is order-preserving.  
\end{proof}




\section{Implicit representation of quasi-chain graphs}
\label{sec:implicit}

The idea of implicit representation of graphs was introduced in \cite{implicit} and
	can be described as follows. A representation of an $n$-vertex graph $G$ is said to be implicit if
	it assigns to each vertex of $G$ a binary code of length $O(\log n)$ so that the adjacency of two
	vertices is a function of their codes. 
	
	Not every class of graphs admits an implicit representation, since a bound on the length of a vertex code implies a bound
	on the number of graphs admitting such a representation. More precisely, only classes containing $2^{O(n \log n)}$ labelled graphs with $n$
	vertices can admit an implicit representation. In the terminology of \cite{speed}, hereditary classes containing $2^{O(n \log n)}$ labelled graphs on $n$ vertices are at most factorial, i.e., they have
	at most factorial speed of growth. Whether all hereditary classes with at most factorial speed admit an implicit representation is a big open question known as the {\it implicit representation conjecture}.
	The conjecture holds for a variety of factorial classes such as interval graphs, permutation graphs (which include chain graphs), line graphs, planar graphs, etc. It also holds for 
	all graph classes of bounded vertex degree, of bounded clique-width, of bounded arboricity (including all proper minor-closed classes), etc.; see \cite{implicit-factorial} for more information on this topic.

	The class of $2P_3$-free bipartite graphs is known to be factorial, which was shown in \cite{Allen}. However, the question whether 
	this class admits an implicit representation remains open. In this section, we answer this question in the affirmative. 
	To this end, we introduce the following general tool.
	
	For a graph $G=(V,E)$, let $A_G$ denote the adjacency matrix of $G$, and for two vertices $x,y\in V$, let $A_G(x,y)$
	be the element of the matrix corresponding to $x$ and $y$. Given a Boolean function $f$ of $k$ variables and graphs $H_1=(V,E_1),\ldots,H_k=(V,E_k)$, 
	we will write $G=f(H_1,\ldots,H_k)$ if $$A_G(x,y)=f(A_{H_1}(x,y),\ldots,A_{H_k}(x,y))$$ for all distinct vertices $x,y\in V$. 
	If $G=f(H_1,\ldots,H_k)$, we say that $G$ is an $f$-function of $H_1,\ldots,H_k$.
	
	\begin{theorem}\label{thm:function}
		Let $X$ be a class of graphs, $k$ a natural number,  $f$ a Boolean function of $k$ variables, and $Y_1,\ldots,Y_k$ classes of 
		graphs admitting an implicit representation. If every graph in $X$ is an $f$-function of graphs $H_1\in Y_1,\ldots,H_k\in Y_k$,
		then $X$ also admits an implicit representation. 
	\end{theorem}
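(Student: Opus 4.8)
The plan is to build the implicit representation of $X$ by gluing together the representations of the classes $Y_1, \ldots, Y_k$ and then applying $f$ inside the decoder. First I would unpack what the hypothesis on each $Y_i$ gives us: there is a fixed decoding function $D_i$ and, for every $m$-vertex graph $H \in Y_i$, an assignment of binary codes $c^H_i(v)$ of length $O(\log m)$ to its vertices such that $A_H(x,y) = D_i(c^H_i(x), c^H_i(y))$ for all distinct $x, y \in V(H)$.

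Now take an arbitrary $G \in X$ on vertex set $V$ with $n = |V|$. By hypothesis $G = f(H_1, \ldots, H_k)$ for some $H_1 \in Y_1, \ldots, H_k \in Y_k$, all on the same vertex set $V$; in particular each $H_i$ has $n$ vertices. The code I would assign to a vertex $v$ is the concatenation $c(v) := c^{H_1}_1(v)\, c^{H_2}_2(v) \cdots c^{H_k}_k(v)$ of the codes of $v$ in the $k$ implicit representations. Since $k$ is a fixed constant and each segment has length $O(\log n)$, the total length is $k \cdot O(\log n) = O(\log n)$, as required. For the decoder I would set $D(c(x), c(y)) := f\bigl(D_1(c^{H_1}_1(x), c^{H_1}_1(y)), \ldots, D_k(c^{H_k}_k(x), c^{H_k}_k(y))\bigr)$; this single function depends only on $f$ and the fixed decoders $D_1, \ldots, D_k$, and by the defining property of the $D_i$ together with $G = f(H_1, \ldots, H_k)$ one gets $D(c(x), c(y)) = f(A_{H_1}(x,y), \ldots, A_{H_k}(x,y)) = A_G(x,y)$.

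The one genuine point to be careful about — and the step I expect to require the most attention — is ensuring that the \emph{single} decoder $D$ can recover the individual segments $c^{H_i}_i(x)$ from the concatenated string $c(x)$, since a priori those segments need not have a predictable length. I would handle this by first padding, for each fixed $n$, all codes used by the representation of $Y_i$ to a common length $\ell_i(n) = O(\log n)$, and then prefixing to every vertex code of $G$ the integer $n$ written in binary (a further $O(\log n)$ bits). With $n$ available the decoder can compute each $\ell_i(n)$, strip the prefix, and split the remaining string into the $k$ blocks of known lengths before feeding them to $D_1, \ldots, D_k$; since both arguments of $D$ come from vertices of the same graph they carry the same value of $n$, so this parsing is consistent. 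All of this keeps the code length $O(\log n)$, so the construction yields an implicit representation of $X$.
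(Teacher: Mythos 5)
Your proposal is correct and follows essentially the same approach as the paper: assign each vertex the concatenation of its $k$ codes from the representations of $Y_1,\ldots,Y_k$ and let the decoder apply $f$ to the outputs of the individual decoders $D_1,\ldots,D_k$. Your extra care about parsing the concatenated code (padding each block to a length computable from $n$ and prefixing $n$ in binary) is a sound refinement of a detail the paper's brief proof leaves implicit.
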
 
	
	\begin{proof}
		To represent a graph $G=f(H_1,\ldots,H_k)$ in $X$ implicitly, we assign to each vertex of $G$ $k$ labels, each of which represents
		this vertex in one of the graphs $H_1,\ldots,H_k$. Given the labels of two vertices $x,y\in V(G)$, we can compute the adjacency of 
		these vertices in each of the $k$ graphs and hence, using the function $f$ (which we may encode in each label with a constant number of bits), we can compute the adjacency of $x$ and $y$ in the graph $G$. 
	\end{proof}

	According to Theorem~\ref{thm:decomposition}, any quasi-chain graph is a $\oplus$-function of a chain graph and a graph of vertex degree at most 2, where $\oplus$ is addition modulo 2. 
	As we mentioned earlier, chain graphs and graphs of vertex degree at most 2 admit an implicit representation. Together with Theorem~\ref{thm:function}
	this implies the following conclusion. 
	\begin{corollary}\label{cor:implicit}
		The class of quasi-chain graphs admits an implicit representation. 
	\end{corollary}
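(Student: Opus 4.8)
The plan is to realise the symmetric-difference decomposition of Theorem~\ref{thm:decomposition} as a Boolean XOR-function and then to invoke the generic combination tool of Theorem~\ref{thm:function}. First I would observe that the operation $\otimes$ on two graphs sharing a common vertex set is nothing but the entrywise XOR of their adjacency matrices: if $G = Z \otimes H$, then for every pair of distinct vertices $x, y$ we have $A_G(x,y) = A_Z(x,y) \oplus A_H(x,y)$, where $\oplus$ denotes addition modulo $2$. In the precise sense of the definition preceding Theorem~\ref{thm:function}, this says exactly that $G$ is an $\oplus$-function of $Z$ and $H$, with $k = 2$ and $f = \oplus$.

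Next I would fix the two auxiliary classes. By Theorem~\ref{thm:decomposition}, every quasi-chain graph $G$ decomposes as $Z \otimes H$ with $Z$ a chain graph and $H$ a graph of vertex degree at most $2$. Chain graphs form a subclass of permutation graphs, which are known to admit an implicit representation (equivalently, one may appeal to their bounded clique-width). Graphs of vertex degree at most $2$ belong to the family of graphs of bounded maximum degree, which also admits an implicit representation, so I would take $Y_1$ to be the class of chain graphs and $Y_2$ the class of graphs of vertex degree at most $2$, both of which admit implicit representations.

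Finally, I would assemble these pieces through Theorem~\ref{thm:function}: taking $X$ to be the class of quasi-chain graphs, $k = 2$, $f = \oplus$, and $Y_1, Y_2$ as above, the hypothesis that every member of $X$ is an $f$-function of a graph from $Y_1$ and a graph from $Y_2$ is exactly the content of the previous two paragraphs, and the conclusion is precisely that quasi-chain graphs admit an implicit representation, which is Corollary~\ref{cor:implicit}.

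I do not expect any genuine obstacle here, since all the substantive work has already been carried out in Theorems~\ref{thm:decomposition} and~\ref{thm:function}; the only point requiring a moment's care is the first one, namely verifying that the symmetric difference of edge sets corresponds \emph{exactly} to the XOR of adjacency entries, so that the abstract machinery applies verbatim. The polynomial-time clause of Theorem~\ref{thm:decomposition} is not needed for the mere \emph{existence} of the representation, although it would additionally guarantee that the vertex labels can be computed efficiently.
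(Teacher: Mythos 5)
Your proposal is correct and matches the paper's own proof exactly: the paper likewise observes that Theorem~\ref{thm:decomposition} expresses every quasi-chain graph as an $\oplus$-function of a chain graph and a graph of maximum degree at most $2$, both of which admit implicit representations, and then applies Theorem~\ref{thm:function}. Your extra remark that the polynomial-time clause is not needed for mere existence is a fair observation, not a deviation.
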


	The same conclusion can be derived in an alternative way, which is of independent interest, because it deals with a parameter motivated by some biological applications.
	This parameter was introduced in \cite{contiguity} under the name contiguity and it can be defined as follows. 
	
	Graphs of contiguity $1$ are graphs that admit a linear order of the vertices in which the neighbourhood of each vertex forms an interval. 
	Not every graph admits such an ordering, in which case one can relax this requirement by looking for an ordering in which the neighbourhood 
	of each vertex can be split into at mots $k$ intervals. The minimum value of $k$ which allows a graph $G$ to be represented in this way is the {\it contiguity} of $G$.

	\begin{theorem}\label{thm:contiguity}
		Contiguity of quasi-chain graphs is at most 3.
	\end{theorem}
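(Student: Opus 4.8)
The plan is to produce a single linear ordering of $V(G)$ witnessing contiguity at most $3$, built on the decomposition $G = Z \otimes H$ furnished by Theorem~\ref{thm:decomposition}. The starting observation is that chain graphs have contiguity $1$. Writing $Z = (A, B, E)$ and listing $A = (a_1, \dots, a_m)$ in non-increasing order of neighbourhoods and $B = (b_1, \dots, b_n)$ in non-decreasing order of neighbourhoods, I would take the concatenated order $a_1, \dots, a_m, b_1, \dots, b_n$. In this order $N_Z(a_i)$ is a suffix of the $B$-block and $N_Z(b_j)$ is a prefix of the $A$-block; in both cases the neighbourhood occupies a block of consecutive positions, i.e.\ a single interval. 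So this order realises contiguity $1$ for $Z$.

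Next I would re-use \emph{exactly this order} for $G$ and track how neighbourhoods change in passing from $Z$ to $G$. By Theorem~\ref{thm:decomposition}, $E(G) = E(Z) \otimes E(H)$, so $G$ is obtained from $Z$ by deleting the matching $M_1 := E(H) \cap E(Z)$ and adding the matching $M_2 := E(H) - E(Z)$. Since both $M_1$ and $M_2$ are matchings, every vertex $v$ is incident to at most one edge of $M_1$ and at most one edge of $M_2$. Hence, in our fixed order, $N_G(v)$ is obtained from the interval $N_Z(v)$ by deleting at most one vertex from its neighbourhood and adjoining at most one new neighbour.

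Finally I would count the resulting intervals. Removing a single vertex from an interval splits it into at most two intervals, and adjoining a single extra neighbour (which sits at one position of the global order) contributes at most one further interval. Thus $N_G(v)$ is a union of at most three intervals in our fixed ordering. As this holds for every $v \in V(G)$, the order witnesses that the contiguity of $G$ is at most $3$, as claimed.

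I do not expect a serious obstacle here: the argument is essentially bookkeeping on top of Theorem~\ref{thm:decomposition}. The two points that require care are (i) confirming that the concatenated $A$-then-$B$ order genuinely turns every chain-graph neighbourhood into a single interval — a direct consequence of the inclusion ordering of neighbourhoods — and (ii) invoking the matching hypotheses of Theorem~\ref{thm:decomposition} to guarantee at most one deletion and at most one insertion per vertex. I would also note that deletions or insertions can only fail to create the worst-case splitting (for instance when the deleted vertex is at an endpoint, or the new neighbour is adjacent to the interval), so the bound of three intervals is never exceeded.
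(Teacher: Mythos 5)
Your proposal is correct and follows essentially the same route as the paper: both take an ordering in which the neighbourhoods in the chain graph $Z$ of Theorem~\ref{thm:decomposition} are single intervals, then observe that passing to $G = Z \otimes H$ deletes at most one neighbour per vertex (splitting its interval into at most two pieces) and adds at most one neighbour (contributing at most one further singleton interval), for a total of at most three. The only difference is cosmetic: you explicitly verify contiguity $1$ for chain graphs via the concatenated $A$-then-$B$ order, a detail the paper states without proof.
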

	
	\begin{proof}
		It is not difficult to see that chain graphs have contiguity $1$. Let $G$ be a quasi-chain graph, and use Theorem~\ref{thm:decomposition} to obtain a decomposition $G = Z \otimes H$. Consider a linear order of the vertices of $G$ such that their neighbourhoods in $Z$ are intervals. $Z$ can be transformed into $G$ by adding
		at most one edge and at most one non-edge incident to each vertex. By adding a non-edge, we split the interval of neighbours of $v$ into at most two intervals, and by adding a neighbour to $v$, its neighbourhood spans at most one additional interval
		consisting of a single vertex.
	\end{proof}
	
	It is not difficult to see that graphs of bounded contiguity admit an implicit representation. Therefore, Corollary~\ref{cor:implicit} follows from Theorem~\ref{thm:contiguity} as well.

\section{Optimisation in quasi-chain graphs}
\label{sec:opt}

	Many algorithmic problems that are NP-complete for general graphs remain computationally intractable for bipartite graphs,
	which is the case, for instance, for  {\sc hamiltonian cycle} \cite{hamilton}, {\sc maximum induced matching} \cite{induced-matching}, {\sc alternating cycle-free matching} \cite{cycle-free},  
	{\sc balanced biclique} \cite{balanced}, {\sc maximum edge biclique} \cite{emb}, {\sc dominating set}, {\sc steiner tree} \cite{steiner}, {\sc independent domination} \cite{Damaschke},
	{\sc induced subgraph isomorphism} \cite{isi}. 
	
	The simple structure of chain graphs implies bounded clique-width and therefore po\-ly\-no\-mi\-al-time solvability of all these and many other problems. 
	However, in quasi-chain graphs the clique-width is unbounded and hence no solution comes for free in this class. Moreover, 
	{\sc induced subgraph isomorphism} remains intractable, as we show in Section~\ref{sec:isi} based on the relationship between quasi-chain graphs and permutations revealed in Theorem~\ref{thm:perm}.
	
	On the other hand, the structure of quasi-chain graphs revealed in Theorem~\ref{thm:decomposition} allows us
	to prove polynomial-time solvability of three problems in the above list, which we do in Section~\ref{sec:poly}.

	\subsection{NP-completeness of {\sc induced subgraph isomorphism} in quasi-chain graphs}
	\label{sec:isi}
	
	The {\sc induced subgraph isomorphism} problem can be stated as follows: given two graphs $H$ and $G$, decide whether $H$ is an induced subgraph of $G$ or not. 
	This problem is known to be NP-complete even when both graphs are bipartite permutation graphs \cite{isi}.
	A related problem on permutations is known as {\sc pattern matching}: given two permutations $\pi$ and $\rho$, it asks whether $\pi$ contains $\rho$ as a pattern.
	This problem is also NP-complete \cite{perm}. Together with Theorem~\ref{thm:perm} this immediately implies that {\it coloured} {\sc induced subgraph isomorphism} is NP-complete for quasi-chain graphs.
	Below we extend this conclusion to uncoloured graphs.
	
	\begin{theorem} 
		The {\sc induced subgraph isomorphism} problem is NP-complete for quasi-chain graphs. 
	\end{theorem}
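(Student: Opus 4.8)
The plan is to reduce from {\sc pattern matching} on permutations, which is NP-complete \cite{perm}, using the map $f^*$ of Theorem~\ref{thm:perm}; membership in NP is immediate, since a candidate vertex map can be checked for being an induced embedding in polynomial time, so all the work lies in the hardness reduction. The naive attempt, sending $(\pi,\rho)$ to the uncoloured pair $(f^*(\pi),f^*(\rho))$, does not quite work, since Theorem~\ref{thm:perm} only governs \emph{colour-respecting} embeddings. The one structural fact that rescues most of it is that every quasi-permutation graph is connected (in $f^*(\pi)$ the first $a$-vertex is adjacent to all of $B$, and the last $b$-vertex to all of $A$), and a connected bipartite graph has a unique bipartition. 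Consequently any induced embedding of $f^*(\pi)$ into $f^*(\rho)$ either preserves the two colour classes, in which case Theorem~\ref{thm:perm} already gives $\pi\subseteq\rho$, or it swaps them globally.

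The colour-swapping case is the crux, and it genuinely occurs: the colour swap of a quasi-permutation graph is again a quasi-permutation graph (of a dihedral image of the permutation), so no feature of $f^*$ alone forbids it. I would eliminate swaps by breaking the symmetry between the two parts with a rigid degree gadget. Assume without loss of generality $|\pi|\le|\rho|$, write $N=|\pi^*|$ and $M=|\rho^*|$, put $D=2M+2$, and let $g(\pi)$ be $f^*(\pi)$ augmented by one new $A$-vertex $c$ adjacent to all of $B$, together with $D$ new $B$-vertices each adjacent only to $c$; define $g(\rho)$ by the same recipe with the same $D$. A short check shows $g(\pi)$ is still quasi-chain: prepending $c$ to a good ordering of $A$ keeps it good because $N(a)\subseteq N(c)$ for every original $a$, and appending the $D$ pendants after a good ordering of $B$ keeps it good because every original $b$-vertex already sees $c$ while each pendant sees only $c$.

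It then remains to prove $\pi\subseteq\rho$ if and only if $g(\pi)$ embeds into $g(\rho)$ as uncoloured graphs. For the forward direction I would take a colour-respecting embedding $f^*(\pi)\hookrightarrow f^*(\rho)$ from Theorem~\ref{thm:perm} and extend it by $c\mapsto c'$ and a bijection of the $D$ pendants of $c$ onto the $D$ pendants of $c'$. The converse is where the gadget earns its keep, via one degree inequality: $c$ has degree $2N+D\ge 2M+4$, whereas in $g(\rho)$ every $B$-vertex has degree at most $2M+1$ and every $A$-vertex other than $c'$ has degree at most $2M$. The first bound forbids the colour-swap (a swap would send $c$ into the $B$-part of $g(\rho)$), so every embedding is colour-preserving; the second forces $c\mapsto c'$. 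Pinning $c$ down pins the rest: the $2N$ core $A$-vertices then land in the core $A$-part, and the core $B$-vertices, having degree at least $2$, cannot land on the degree-one pendants, so they land in the core $B$-part. Hence the embedding restricts to a colour-respecting embedding of $f^*(\pi)$ into $f^*(\rho)$, and Theorem~\ref{thm:perm} yields $\pi\subseteq\rho$.

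I expect the swap analysis to be the main obstacle: the gadget must be simultaneously rigid enough to forbid swaps and to pin the core vertices, light enough to preserve the quasi-chain property, and compatible with the forward embedding. Routing all three requirements through the single inequality $\deg(c)>\max_{b\in B(g(\rho))}\deg(b)$ is what makes the argument clean, and verifying that inequality together with the elementary degree bounds above is the only genuinely delicate step.
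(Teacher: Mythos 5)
Your proposal is correct and follows essentially the same route as the paper: both reduce from coloured induced subgraph isomorphism (hard via Theorem~\ref{thm:perm} and the NP-completeness of {\sc pattern matching}) by attaching a star gadget whose centre is joined to all vertices of one colour class, with enough pendant leaves to dominate all other degrees, so that the degree condition pins the centre and connectedness forces every embedding to be colour-preserving. Your explicit degree bounds ($D = 2M+2$, the $2M+1$ versus $2M+4$ comparison) and the verification that the gadget preserves the quasi-chain property just spell out details the paper leaves implicit.
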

	
	\begin{proof}
		Let $H$ and $G$ be two coloured connected quasi-chain graphs. The NP-completeness of {\sc pattern matching} together with Theorem~\ref{thm:perm} imply that determining whether 
		there is an embedding of $H$ into $G$ as an induced subgraph that respects the colours is an NP-complete problem. To reduce the problem to uncoloured graphs, we 
		modify the instance of the problem as follows.  
		
		Let $p$ be a natural number greater than the maximum vertex degree in $G$, and let $K_{1,p}$ be a star with the center $x$.
		We add this star to $G$, connect $x$ to all the black vertices of $G$ and denote the resulting graph by $G^*$. 
		Similarly, we add this star to $H$, connect $x$ to all the black vertices of $H$ and denote the resulting graph by $H^*$.
		Clearly, $G^*$ and $H^*$  are quasi-chain graphs.
		
		Now we ignore the colours and ask whether $G^*$ contains $H^*$ as an induced subgraph. If $G^*$ contains $H^*$,  
		then vertex $x$ in $H^*$ must map to vertex $x$ in $G^*$ (due to the degree condition), 
		and the vertices of $H$ in $H^*$ are mapped to the vertices of $G$ in $G^*$ in a colour-preserving way (due to the connectedness of $G$ and $H$). 
		Therefore, $G$ contains $H$ as a {\it coloured} induced subgraph if and only if $G^*$ contains $H^*$ as an induced subgraph. 
		Since $G^*$ and $H^*$ are quasi-chain graphs and these graphs can be obtained from $G$ and $H$ in polynomial time, 
		we conclude that {\sc induced subgraph isomorphism} is NP-complete for quasi-chain graphs. 
	\end{proof}

	\subsection{Polynomial-time algorithms for quasi-chain graphs}
	\label{sec:poly}
	
	In this section, we use Theorem~\ref{thm:decomposition} to prove polynomial-time solvability of the following problems in quasi-chain graphs: 
	{\sc balanced biclique}, {\sc maximum edge biclique}, and {\sc independent domination}. 
	We emphasize that Theorem~\ref{thm:decomposition}  not only provides a structural characterisation of quasi-chain graphs, 
	it also proves that a quasi-chain graph can be transformed into a chain graph by removing a matching and adding a matching in polynomial time, which is an important ingredient in all three solutions. 
	We start with an auxiliary lemma.
	
	\begin{lemma}\label{lem:opt}
		A quasi-chain graph $G$ with $n$ vertices contains a collection $\mathcal I$ of $O(n)$ subsets of vertices that can be found in polynomial time such that 
		every subset $I \in \mathcal I$ induces a graph of vertex degree at most 1, and every independent set in $G$ is contained in one of these subsets.
	\end{lemma}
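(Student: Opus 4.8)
The plan is to run everything off the enhanced letter representation supplied by Theorem~\ref{thm:enhanced-let}. First I would compute, in polynomial time, such a representation of $G$: a word $w = w_1\cdots w_n$ over $\{a,b\}$ together with a \emph{top} matching (added edges, each joining a $b$ to a \emph{later} $a$) and a \emph{bottom} matching (removed edges, each joining an $a$ to a \emph{later} $b$). I identify each vertex with its position in $w$, so that for $u<v$ the vertices are adjacent in $G$ exactly when either $w_u=a,\ w_v=b$ and $uv$ is \emph{not} a bottom edge, or $w_u=b,\ w_v=a$ and $uv$ \emph{is} a top edge. In particular the only ``chain'' edges are pairs with an $a$ strictly preceding a $b$, minus the removed bottom matching. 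The entire collection $\mathcal I$ will be built from this single linear order.

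I would then define two families of candidate sets. For every $p\in\{0,1,\dots,n\}$, let $I_p$ consist of all $b$'s at positions $\le p$ together with all $a$'s at positions $>p$ (the $n+1$ ``cuts''). For every bottom edge $e$, with its $a$-endpoint $a_e$ at position $\mathrm{pos}(a_e)<\mathrm{pos}(b_e)$, let $K_e$ consist of all $b$'s at positions $<\mathrm{pos}(a_e)$, all $a$'s at positions $>\mathrm{pos}(b_e)$, and the two vertices $a_e,b_e$ themselves. Since the bottom edges form a matching there are at most $n/2$ of them, so $|\mathcal I|\le (n+1)+n/2=O(n)$, and every set is clearly computable in polynomial time. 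The first thing to check is that each candidate induces a subgraph of maximum degree at most $1$. The key point is that inside any of these sets the only edges of $G$ are top edges: within $I_p$ every $a$ lies strictly to the right of every $b$, so there is no $a$-before-$b$ pair at all; within $K_e$ the unique $a$-before-$b$ pair is $a_eb_e$, which is precisely the removed bottom edge. Hence each internal edge is a top edge, and as the top edges form a matching, the induced subgraph has maximum degree at most $1$.

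The main content is to show that every independent set $S$ of $G$ lies in some $I_p$ or some $K_e$; this is where the matching structure of the bottom edges is essential. If $S$ has no $a$, it is a set of $b$'s and so $S\subseteq I_n$. Otherwise let $i_1$ be the leftmost $a$-position of $S$. I claim $S$ contains at most one $b$ to the right of $i_1$: if $b,b'$ were two such, the vertex at $i_1$ would precede both, so independence would force both $i_1b$ and $i_1b'$ to be removed bottom edges, contradicting that the bottom edges form a matching. If there is no such $b$, then all $b$'s of $S$ are at positions $<i_1$ and all $a$'s at positions $\ge i_1$, giving $S\subseteq I_{i_1-1}$. If there is exactly one such $b$, it must be the bottom-partner $b_e$ of $a_e:=$ the vertex at $i_1$; I then verify $S\subseteq K_e$, the only non-immediate point being that $S$ cannot contain an $a$ strictly between $i_1$ and $\mathrm{pos}(b_e)$ — but any such $a$ precedes $b_e$ through an un-removed chain edge, hence is adjacent to $b_e\in S$ and is excluded.

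I expect the main obstacle to be the bookkeeping in this last step: pinning down exactly which $a$- and $b$-positions an independent set may occupy once it ``crosses'' a bottom edge, and matching those constraints precisely to the definition of $K_e$ so that containment holds with no leftover cases. Once that is settled, the remaining ingredients — the degree bound, the count $|\mathcal I|=O(n)$, and the polynomial running time — follow routinely from the representation of Theorem~\ref{thm:enhanced-let}.
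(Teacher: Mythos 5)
Your proof is correct and takes essentially the same approach as the paper: the paper applies the decomposition $G = Z \otimes H$ of Theorem~\ref{thm:decomposition} (of which the enhanced letter representation of Theorem~\ref{thm:enhanced-let} is just the concrete, word-based form), takes the $O(n)$ maximal independent sets of the chain graph $Z$ --- exactly your cuts $I_p$ --- plus one set per bottom edge built from the non-neighbourhoods of its two endpoints --- essentially your $K_e$ --- and uses the same matching-based argument to show that an independent set of $G$ spans at most one bottom edge of $Z$. The only differences are cosmetic: your $K_e$ may contain a couple of extra top-matched vertices compared to the paper's set, which is harmless since the internal edges are still contained in the top matching and the degree bound survives.
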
  
	
	\begin{proof}
		First, we observe that there are $O(n)$ inclusion-wise maximal independent sets in a chain graph, and that all of them can be found in polynomial time.
		
		Now let $G=Z\otimes H$ be a quasi-chain graph and let $S$ be an independent set in $G$.  Then in the graph $Z$, the vertices of $S$ either form an independent set, or
		induce some bottom edges, i.e., some edges of $E(H)\cap E(Z)$. Since bottom edges form a matching and $Z$ is $2P_2$-free, we conclude 
		that $S$ contains at most one bottom edge in the graph $Z$.
		
		If $S$ is an independent set in $Z$, then it is contained in a maximal independent set $I$ in $Z$. For each maximal independent set $I$ in the graph $Z$, 
		the vertices of $I$ induce in $G$ a subgraph $G[I]$ of vertex degree at most $1$, because all edges of $G[I]$ are top edges and therefore they form a matching.
		
		Assume now that $S$ contains an edge $a_ib_j$ in the graph $Z$. We denote the set of non-neighbours of $a_i$ in $G$ by $A_i$ and the set of non-neighbours of $b_j$ in $G$ by $B_j$, and let $I=A_i\cup B_j$.
		In particular, $S\subseteq I$. In $Z$, the vertices of $I$ induce a subgraph $Z[I]$ containing exactly one edge $a_ib_j$. Indeed, no edge $e\neq a_ib_j$ in $Z[I]$ can be incident to $a_i$ or $b_j$,
		because otherwise both $e$ and $a_ib_j$ are bottom edges, which is impossible, and if $e$ is not incident to $a_i$ and $b_j$, then $e$ and $a_ib_j$ create an induced $2P_2$ in $Z$, which is not possible either.
		Since $a_ib_j$ is the only edge in $Z[I]$ and this edge is not present in $G[I]$, we conclude that all edges of $G[I]$ are top edges and hence $G[I]$ is a graph of vertex degree at most one. 
		
		Putting everything together, our collection $\mathcal I$ consists of two types of sets: the maximal independent sets from $Z$, and the sets constructed as above from each of the bottom edges. This collection thus has $O(n)$ sets, and can be found in polynomial time as claimed.
	\end{proof}

	\subsubsection{Bicliques in quasi-chain bipartite graphs}

	A \emph{biclique} is a complete bipartite graph $K_{p,q}$ for some $p$ and $q$.
	In a bipartite graph, the problem of finding a biclique with the maximum number of vertices can be solved in polynomial time.
	However, the problem of finding a biclique with the maximum number of edges, known as the {\sc maximum edge biclique} problem,
	is NP-complete for bipartite graphs \cite{emb}. Additionally, the problem of finding a biclique $K_{p,p}$ with the maximum value of $p$, known as the {\sc balanced biclique} problem,
	is NP-complete for bipartite graphs \cite{balanced}. We show that both problems can be solved in polynomial time when restricted to quasi-chain graphs.

	\begin{theorem}
		The {\sc maximum edge biclique} and {\sc balanced biclique} problems can be solved in polynomial time for quasi-chain graphs.
	\end{theorem}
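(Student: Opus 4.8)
The plan is to reduce both problems to the $O(n)$ vertex subsets supplied by Lemma~\ref{lem:opt}, applied not to $G$ itself but to its bipartite complement. The key observation is that bicliques of $G=(A,B,E)$ are exactly the independent sets of $\widetilde G$: sets $P\subseteq A$ and $Q\subseteq B$ form a biclique $K_{|P|,|Q|}$ in $G$ if and only if $P$ is complete to $Q$, i.e.\ if and only if there are no edges of $\widetilde G$ between $P$ and $Q$; since every edge of the bipartite graph $\widetilde G$ runs between $A$ and $B$, this is precisely the statement that $P\cup Q$ is independent in $\widetilde G$. I would first record that the class of quasi-chain graphs is closed under bipartite complementation: if $(a_1,\dots,a_\ell)$ is a good ordering of a part of $G$, then the reversed ordering is good for the same part in $\widetilde G$, because $N_{\widetilde G}(a_p)\setminus N_{\widetilde G}(a_q)=N_{G}(a_q)\setminus N_{G}(a_p)$. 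Hence $\widetilde G$ is again a quasi-chain graph and Lemma~\ref{lem:opt} applies to it.

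Applying Lemma~\ref{lem:opt} to $\widetilde G$ yields, in polynomial time, a collection $\mathcal I$ of $O(n)$ vertex subsets such that $\widetilde G[I]$ has vertex degree at most $1$ for each $I\in\mathcal I$, and every independent set of $\widetilde G$ is contained in some $I$. By the correspondence above, every biclique of $G$ is thus contained in some $I$, while conversely any biclique of $G[I]$ is a biclique of $G$. Consequently a maximum-edge (respectively balanced) biclique of $G$ lies inside a single $G[I]$, and it suffices to solve each problem on each of the $O(n)$ induced subgraphs $G[I]$ and return the best solution found.

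It remains to handle a single $G[I]$, which has a very restricted shape: since $\widetilde G[I]$ has maximum degree $1$, it is a matching $M$ (pairs joining an $A$-vertex to a $B$-vertex) together with isolated vertices, so $G[I]$ is a complete bipartite graph with the matching $M$ deleted. Writing $u_A,u_B$ for the numbers of $A$- and $B$-vertices uncovered by $M$ and $\mu$ for the number of matched pairs, a biclique of $G[I]$ amounts to keeping all uncovered vertices and, for each matched pair, keeping its $A$-endpoint, keeping its $B$-endpoint, or neither (never both). If $x$ pairs contribute an $A$-endpoint and $y$ pairs a $B$-endpoint, with $x+y\le\mu$, the biclique is a $K_{u_A+x,\,u_B+y}$. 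For {\sc maximum edge biclique} I would maximise the product $(u_A+x)(u_B+y)$, which is achieved by taking $x+y=\mu$ and maximising the resulting concave quadratic in $x$ over an integer interval; for {\sc balanced biclique} I would maximise $\min(u_A+x,\,u_B+y)$, equivalently find the largest $m$ with $\max(0,m-u_A)+\max(0,m-u_B)\le\mu$. Both are computed in constant time per subset.

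In summary, the approach is: bipartite-complement, invoke Lemma~\ref{lem:opt} on $\widetilde G$, and solve an elementary optimisation on each of the $O(n)$ subsets. The genuine content is the equivalence between bicliques of $G$ and independent sets of $\widetilde G$ together with the closure of quasi-chain graphs under bipartite complementation, which let Lemma~\ref{lem:opt} do the heavy lifting. The step most prone to fiddly case analysis is the per-subset optimisation, where the matched pairs of $M$ carry the only nontrivial choices; I expect no real difficulty there, since the objective reduces to optimising over the single integer degree of freedom in the split $(x,y)$.
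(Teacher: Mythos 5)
Your proposal is correct and takes essentially the same approach as the paper's proof: pass to the bipartite complement $\widetilde G$ (again a quasi-chain graph), apply Lemma~\ref{lem:opt} to $\widetilde G$ so that every biclique of $G$ lies in one of $O(n)$ sets inducing a complete bipartite graph minus a matching, and finish with an elementary one-parameter optimisation on each such graph. The only cosmetic differences are that you justify closure under bipartite complementation via reversed good orderings where the paper invokes the bipartite self-complementarity of $2P_3$, and your $(x,y)$-parametrisation is equivalent to the paper's formula $\max_{0\le i\le m}(t-m+i)(s-i)$ and its balanced-biclique case split.
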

	
	\begin{proof}
		Let $G=(A,B,E)$ be a quasi-chain graph. A biclique in $G$ becomes an independent set in the bipartite complement $\widetilde{G}$ of $G$.
		Since $2P_3$ is self-complementary in the bipartite sense, we note that $\widetilde{G}$ is a quasi-chain graph too. 
		
		Let $\mathcal I$ be as in Lemma~\ref{lem:opt} for $\widetilde{G}$. Every independent set in $\widetilde{G}$ is contained in a maximal independent set, which in turn is contained in one of the subsets of $\mathcal I$.
		In $G$, those subsets induce almost complete bipartite graphs, 
		i.e., graphs in which every vertex has at most one non-neighbour in the opposite part. 
		Therefore, to solve both problems for $G$, it suffices to solve them for this collection of $O(n)$ almost complete bipartite graphs.
		
		But those problems are both easy for almost complete bipartite graphs: suppose a graph is obtained from $K_{s,t}$ by deleting a matching of size $m\le s\le t$.
		It is not difficult to see that the number of edges in a maximum edge biclique in this graph equals $\max\limits_{0\leq i\leq m}(t-m+i)\cdot (s-i)$.
		As for the {\sc balanced biclique} problem, the optimal solution is given by $p = s$ if $t - s \geq m$, and by $\left\lfloor\frac{t - m + s}{2}\right\rfloor$ if $t - s < m$. 
	\end{proof}
	
	\subsubsection{Independent domination in quasi-chain graphs}
	
	The {\sc independent dominating set} problem asks to find in a graph $G$ an inclusion-wise maximal independent set of minimum cardinality. 
	This problem is NP-complete for general graphs and remains intractable in many restricted graph families. In particular, it is NP-complete  
	both for $2P_3$-free graphs \cite{zverovich} and for  bipartite graphs \cite{Damaschke}. 
	In the following theorem, we prove polynomial-time solvability of the problem for quasi-chain graphs.

	\begin{theorem}
		The {\sc independent dominating set} problem can be solved for  quasi-chain graphs in polynomial time.
	\end{theorem}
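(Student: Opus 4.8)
The plan is to reduce the {\sc independent dominating set} problem on a quasi-chain graph $G$ to a manageable search over the structured collection $\mathcal I$ supplied by Lemma~\ref{lem:opt}. Recall that an independent dominating set is precisely a \emph{maximal} independent set of minimum cardinality, so the task is to minimise $|S|$ over all maximal independent sets $S$ of $G$. By Lemma~\ref{lem:opt}, every independent set of $G$ sits inside one of the $O(n)$ subsets $I \in \mathcal I$, each of which induces a graph of vertex degree at most $1$ (an induced matching plus isolated vertices). The first step is therefore to enumerate $\mathcal I$ in polynomial time and, for each $I$, to search for the smallest maximal independent set of $G$ that happens to be contained in $I$. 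Taking the minimum over all $I \in \mathcal I$ will give the global optimum, since any independent dominating set is contained in some member of $\mathcal I$.

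First I would fix a subset $I \in \mathcal I$ and restrict attention to independent sets $S \subseteq I$ that are maximal in $G$ (not merely maximal within $I$). The key simplification is that $G[I]$ has maximum degree at most $1$, so its independent sets are easy to describe: for each matched pair $\{u,v\}$ in $G[I]$ we must choose at most one of $u,v$, and isolated vertices of $G[I]$ may be freely included. The crucial constraint is the global maximality (domination) requirement: $S$ must dominate every vertex of $G$, i.e.\ every vertex outside $S$ must have a neighbour in $S$. The plan is to encode the choice ``which endpoint of each matching edge of $G[I]$, if any, to include'' as a small local decision and to verify the domination condition via the chain structure of $G = Z \otimes H$ from Theorem~\ref{thm:decomposition}. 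Because the underlying chain graph $Z$ orders neighbourhoods by inclusion, whether the current partial selection already dominates a given vertex can be tested by simple threshold conditions, and the modifications introduced by the degree-$\le 2$ graph $H$ affect only one edge and one non-edge per vertex.

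Concretely, I expect to reduce each per-$I$ subproblem to a minimisation that can be solved greedily or by a short dynamic program along the chain ordering of the vertices of $I$. The domination requirement splits into two parts: vertices inside $I$ must be dominated by the chosen subset, and vertices outside $I$ must be dominated by $S$ as well. The latter is where the chain order helps, because for a vertex $b \notin S$ the set of its potential dominators in $A \cap I$ forms an interval-like family governed by neighbourhood inclusion in $Z$, corrected by at most one top/bottom edge from $H$. One then minimises $|S|$ subject to covering all such vertices, which is a covering problem on an almost-laminar family and hence solvable in polynomial time. Repeating over all $O(n)$ members of $\mathcal I$ and taking the best solution yields the answer.

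The main obstacle I anticipate is the bookkeeping around the domination constraint for vertices \emph{outside} $I$: the collection $\mathcal I$ is designed so that every independent set is captured, but an optimal independent dominating set $S \subseteq I$ still has to dominate all of $V(G) \setminus I$, and these external vertices need not be ``nice'' with respect to $I$. The delicate point is to show that, once the chain decomposition and the $O(1)$ local corrections per vertex from $H$ are accounted for, the feasibility and the cost of extending a choice inside $I$ to a globally maximal set can both be evaluated in polynomial time, and that minimising over the structured family $\mathcal I$ does not miss the optimum. I would handle this by exploiting the inclusion ordering of neighbourhoods in $Z$ to argue that the domination constraints are ``nested'', reducing the search to a polynomial number of candidate selections per subset $I$.
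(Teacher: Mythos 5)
Your outer loop coincides with the paper's: enumerate the $O(n)$ sets $I \in \mathcal I$ from Lemma~\ref{lem:opt}, look for an independent dominating set of $G$ inside each $I$, and return the smallest one found, the point being that every independent dominating set lies in some $I$. But you miss a simplification that the paper exploits and that makes your heavy per-$I$ machinery unnecessary: writing $I'$ for the vertices of degree $1$ in $G[I]$ and $I'' = I - I'$, any independent subset of $I$ that dominates $G$ must contain \emph{all} of $I''$ (an isolated vertex of $G[I]$ has no possible dominator inside $I$) and \emph{exactly one} endpoint of each matching edge of $G[I']$. Consequently all candidate solutions within a fixed $I$ have the same cardinality, so each per-$I$ subproblem is purely a feasibility question, not a minimisation, and no dynamic program or covering optimisation along the chain order is needed.

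The genuine gap is precisely at that feasibility step, which you assert ("a covering problem on an almost-laminar family and hence solvable in polynomial time") but never prove. The paper settles it with a short structural argument that your sketch does not reach: let $A''$, $B''$ be the outside vertices with a neighbour in $I''$ (these are dominated for free), and let $A'$, $B'$ be the remaining outside vertices, which must be dominated by the chosen endpoints of the matching edges. By $2P_3$-freeness, the graphs $G[I'_A \cup B']$ and $G[I'_B \cup A']$ are $2P_2$-free, i.e.\ chain graphs, so the neighbourhoods are nested and a \emph{single} vertex $x \in I'_A$ dominates all of $B'$ (likewise a single $y \in I'_B$ dominates all of $A'$); in particular, joint coverage by several chosen vertices is impossible without single-vertex coverage, which is why no genuine covering problem arises. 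Feasibility then reduces to whether such a pair $(x, y)$ can be chosen non-adjacent, i.e.\ not the two endpoints of one matching edge: if yes, pick them and complete the other edges arbitrarily; if the only dominating vertices $x$ and $y$ are matched to each other, no independent subset of $I$ dominates $G$. Your intuition that the inclusion order in $Z$ makes the domination constraints "nested" is pointing at exactly this, but without the chain-graph observation and the case analysis on the adjacency of $x$ and $y$ --- the one place where the $A$-side and $B$-side choices interact through per-edge exclusivity --- the polynomial-time claim for the per-$I$ step remains unestablished, and that is the heart of the proof.
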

	
	\begin{proof}
		Let $G=(A,B,E)$ be a quasi-chain graph and $S$ an optimal solution to the problem in $G$, and let $\mathcal I$ be as in Lemma~\ref{lem:opt}. 
Note that $S$ is contained in at least one of the elements of $\mathcal I$. 
Moreover, crucially, for any $I \in \mathcal I$, all maximal independent sets in $G[I]$ have the same size. This suggests the following way of finding an optimal solution:
		
		\begin{enumerate}
			\item For each $I \in \mathcal I$, determine if $I$ contains an independent set that dominates $G$, and if yes, find such a set.
			
			\item Among the sets we found, pick one with minimum size.
		\end{enumerate}
		
We claim that this produces an optimal solution to the problem. Indeed, this procedure is guaranteed to produce a set $S$, 
since any optimal solution to the problem dominates $G$ and is contained in some $I \in \mathcal I$. 
Moreover, since all maximal independent sets in $G[I]$ have the same size (and $S$ dominates $G$, so it is maximal in both $G$ and $G[I]$), $S$ must be an optimal solution.
		
		It thus suffices to show that Step 1 can be done efficiently. To do this, let $I \in \mathcal I$. 
Let $I'\subseteq I$ be the subset of $I$ of vertices that have degree 1 in $G[I]$, and put $I'':=I-I'$.
 We note that any independent subset of $I$ dominating $G$ must contain all vertices of $I''$, and exactly one vertex from each edge of $G[I']$. 
Let $A''$ and $B''$ be the sets of vertices in $A$, respectively $B$ that have at least one neighbour in $I''$. 
We also denote $I'_A:=I'\cap A$ and $I'_B:=I'\cap B$, and  let $A'$ and $B'$ be the sets of vertices in $A - (A''\cup I'_A)$, respectively $B - (B''\cup I'_B)$ that have at least one neighbour in $I'$. 
		
		If $I$ does not dominate $G$, then no subset of $I$ dominates $G$; we may thus assume $I$ dominates $G$, that is, $A-I = A' \cup A''$ and $B-I = B' \cup B''$. 
Since $G$ is $2P_3$-free, the graphs $G[I'_A\cup B']$ and $G[I'_B\cup A']$ are $2P_2$-free, i.e., chain graphs. 
		It follows that $I'_A$ and $I'_B$ each have vertices that dominate $B'$ and $A'$ respectively. 
If there exists such a pair $x \in I'_A$ and $y \in I'_B$ that is {\it non-adjacent}, then we are done: 
we pick $x$ and $y$ in their respective edges, and arbitrarily choose vertices from each other edge of $I'$ to complete our  independent dominating set. 
Otherwise, the unique vertices $x \in I'_A$ and $y \in I'_B$ that dominate $B'$ and $A'$ respectively belong to the same edge of $I'$. 
In this case, no independent set of $I$ dominates $G$, since vertices $A'$ and $B'$ have no neighbours in $I''$ by construction, 
and (using $2P_2$-freeness) $I'_A - \{x\}$ does not dominate $A'$, and $I'_B - \{y\}$ does not dominate $B'$. This proves the theorem.
\end{proof}

\section{Conclusion}

In this paper, we proposed a structural characterization for the class of $2P_3$-free bipartite graphs and derived a number of interesting conclusions 
from this characterization. Still, many questions remain unanswered. In particular, it would be interesting to find a boundary separating well-quasi-ordered 
subclasses of quasi-chain graphs from those that contain infinite antichains with respect to the induced subgraph relation. Also, complexity of several 
important algorithmic problems in the class of quasi-chain graphs remain unknown. One more important direction of research is analyzing the extension of 
quasi-chain graphs, where a ``one-sided'' copy of a $2P_3$ is forbidden, i.e. the class of coloured bipartite graphs that do not contain an induced copy of
$2P_3$ with, say, white centres. In particular, does this extension admit an implicit representation?  



\begin{thebibliography}{99}
	
	\bibitem{Allen}
	P. Allen, Forbidden induced bipartite graphs. {\it J. Graph Theory}, 60 (2009), 219--241.
	
	\bibitem{implicit-factorial}
	A. Atminas, A. Collins, V. Lozin, V. Zamaraev, Implicit representations and factorial properties of graphs. {\it Discrete Math.} 338 (2015),  164--179.
	
	\bibitem{speed}
	J. Balogh, B. Bollob\'as, D. Weinreich, The speed of hereditary properties of graphs. {\it J. Combin. Theory Ser. B} 79 (2000), 131--156.
	
	\bibitem{antichain}
	M. B\'{o}na, D. Spielman, An infinite antichain of permutations. {\it Electron. J. Combin.} 7 (2000), N2.
	
	\bibitem{perm}
	P. Bose, J. F. Buss, A. Lubiw, 
	Pattern matching for permutations. 
	{\it Information Processing Letters}, 65 (1998), 277--283.
	
	\bibitem{half-graph}
	D. Conlon, J. Fox,  Bounds for graph regularity and removal lemmas. {\it Geometric and Functional Analysis}, 22 (2012), 1191--1256.
	
	\bibitem{cographs}
	P. Damaschke, Induced subgraphs and well-quasi-ordering. 
	{\it J. Graph Theory} 14(4) (1990), 427--435.
	
	\bibitem{Damaschke}
	P. Damaschke, H.  M\"uller and D. Kratsch,  
	Domination in convex and chordal bipartite graphs. {\it Inform. Process. Lett}. 36  (1990),  231--236.
	
	\bibitem{contiguity}
	P. Goldberg, M. Golumbic, H. Kaplan, R. Shamir, Four strikes against physical mapping of DNA. {\it J. Computational Biology}, 2  (1995), 139--152.
	
	\bibitem{difference}
	P. Hammer, U.N. Peled, X. Sun, Difference graphs. {\it Discrete Appl. Math.} 28 (1990), 35--44.
	
	\bibitem{isi}
	P. Heggernes, P. van 't Hof, D. Meister, Y. Villanger,
	Induced Subgraph Isomorphism on proper interval and bipartite permutation graphs.
	{\it Theoretical Computer Science}, 562 (2015), 252--269.
	
	\bibitem{balanced}
	D. S. Johnson, The NP-completeness column: An ongoing guide. {\it J. Algorithms},
	8 (1987),  438--448.
	
	\bibitem{implicit}
	S. Kannan, M. Naor, S. Rudich, Implicit representation of graphs. {\it SIAM J. Discrete Math.} 5 (1992), 596--603.
	
	\bibitem{wqo}
	N. Korpelainen, V. Lozin,  Bipartite induced subgraphs and well-quasi-ordering. {\it J. Graph Theory}, 67 (2011), 235--249.
	
	\bibitem{bigenic}
	N. Korpelainen, V. Lozin, Two forbidden induced subgraphs and well-quasi-ordering. {\it Discrete Math.}, 311(16) (2011), 1813--1822.
	
	\bibitem{universal}
	V. Lozin, G. Rudolf, Minimal universal bipartite graphs. {\it Ars Combin.} 84 (2007), 345--356.
	
	\bibitem{cw}
	V. Lozin, J. Volz, The clique-width of bipartite graphs in monogenic classes. {\it Internat. J. Found. Comput. Sci.} 19 (2008), 477--494.
	
	\bibitem{Kitaev}
	S. Kitaev, Patterns in permutations and words.  Monographs in Theoretical Computer Science. An EATCS Series. Springer, Heidelberg, 2011. xxii+494 pp.
	
	\bibitem{induced-matching}
	V. Lozin,
	On maximum induced matchings in bipartite graphs.
	{\it Inform. Process. Lett.} 81 (2002),  7--11.
	
	\bibitem{model}
	M. Malliaris, S. Shelah, Regularity lemmas for stable graphs.
	{\it Trans. Amer. Math. Soc.} 366 (2014), 1551--1585.
	
	\bibitem{steiner}
	H. M\"{u}ller, A. Brandst\"adt, Andreas The NP-completeness of STEINER TREE and DOMINATING SET for chordal bipartite graphs. {\it Theoret. Comput. Sci.} 53 (1987),  257--265. 
	
	\bibitem{cycle-free}
	H. M\"{u}ller, Alternating cycle-free matchings. {\it Order}, 7 (1990), 11--21.
	
	
	\bibitem{hamilton}
	H. M\"{u}ller, Hamiltonian circuits in chordal bipartite graphs. {\it Discrete Math.} 156 (1996), 291--298.
	
	\bibitem{emb}
	R. Peeters, The maximum edge biclique problem is NP-complete. {\it Discrete Appl. Math.} 131 (2003), 651--654.
	
	\bibitem{lettericity}
	M. Petkov\v sek, Letter graphs and well-quasi-order by induced subgraphs.
	{\it Discrete Math}. 244 (2002), 375--388.
	
	\bibitem{robertson-seymour}
	N. Robertson, P. Seymour, Graph Minors. XX. Wagner's conjecture. {\it J. Combin. Theory B} 92(2) (2004), 325--357.
	
	\bibitem{monotone}
	V. Vatter, S. Waton, On partial well-order for monotone grid classes of permutations. {\it Order} 28 (2011), 193--199.
	
	\bibitem{zverovich}
Zverovich, I.E.: Satgraphs and independent domination. Part 1. Theoretical Computer Science \textbf{352}, 47--56 (2006) 
	
\end{thebibliography}
\end{document}